\documentclass[reqno]{amsart}

\usepackage[english]{babel}
\usepackage[centering]{geometry}

\usepackage{amsmath,amssymb,amsfonts,amsthm,amscd}
\usepackage{mathrsfs}
\usepackage{amsbsy,bm}
\usepackage{enumerate}
\usepackage{textcomp}
\usepackage{xcolor}
\usepackage{caption}
\usepackage{subcaption}
\usepackage{booktabs}
\usepackage{graphicx}
\usepackage{needspace}

\usepackage{hyperref}
\usepackage{amsmath}
\allowdisplaybreaks[4]
\hypersetup{hidelinks}

\numberwithin{equation}{section}

\makeatletter
\def\subsection{\@startsection{subsection}{2}%
  \z@{.5\linespacing\@plus.7\linespacing}{.3\linespacing}%
  {\normalfont\bfseries}}
\makeatother

\renewcommand{\epsilon}{\varepsilon}

\newtheorem{theorem}{Theorem}[section]
\newtheorem*{theorem*}{Theorem}
\newtheorem{lemma}{Lemma}[section]

\theoremstyle{definition}

\newtheorem{remark}{Remark}[section]

\title[Weighted Anisotropic Curvature Flows]
{Anisotropic-Length-Preserving Weighted Anisotropic Curvature Flows}
\author{Zhishuai Liu, Guoxin Wei}
\address{School of Mathematical Sciences, South China Normal University,
	Guangzhou 510631, People's Republic of China}
\date{}

\email{liuzs@m.scnu.edu.cn}
\email{weiguoxin@tsinghua.org.cn}

\makeatletter
\@namedef{subjclassname@2020}{\textup{2020} Mathematics Subject Classification}
\makeatother

\subjclass[2020]{Primary 53E10; Secondary 35B40, 35K55, 52A10}

\keywords{Curvature flow, anisotropic, long-time behavior, Wulff shape}

\begin{document}

\begin{abstract}
In this paper, we study the anisotropic-length-preserving weighted anisotropic
curvature flow for smooth convex closed plane curves.
For any smooth, embedded, closed, convex initial curve, the flow exists smoothly for all time, and the anisotropic curvature converges smoothly to that of the boundary of the corresponding homothetic Wulff shape as time tends to infinity.
\end{abstract}

\maketitle

\section{Introduction}

\subsection{Background}
In the past few decades, the plane curve flows have received a lot of attention.
A plane curve flow is a one-parameter family of closed plane curves
$\gamma(\cdot,t)$, defined for $t\in[0,T)$ for some $T>0$.
The curves evolve in the direction of their inward unit normal vector field
$\mathbf{n}$ with normal speed $V$, namely,
\begin{equation}\label{1.1}
  \left\{
  \begin{array}{ll}
  \dfrac{\partial \gamma}{\partial t}(u,t)
  =
  V(x,\kappa,\theta)\mathbf{n},
  & \text{in } S^1\times(0,T),\\[2mm]
  \gamma(u,0)
  =
  \gamma_0(u),
  & \text{on } S^1,
  \end{array}
  \right.
\end{equation}
where $\gamma_0:S^1\to\mathbb{R}^2$ is a given smooth closed curve and
$u\in S^1$ is a parameter independent of time.
The normal speed $V$ is assumed to depend on the curvature $\kappa$, the tangent angle $\theta$ and the position vector $x=\gamma(u,t)$.
Models of curve flows arise in various applications, such as image processing~\cite{25}, phase transitions~\cite{16} and crystal growth~\cite{14}.
For the general flow \eqref{1.1}, the Euclidean length $L(t)=\int_{\gamma} ds$ of the evolving curve $\gamma(t)$ satisfies
\begin{equation*}
  \frac{dL}{dt}=-\int_{\gamma}\kappa Vds,
\end{equation*}
see, for instance, Section~1.3 of Chou--Zhu~\cite{6}, where $ds$ denotes the arc-length element.

Thus, the well-known curve shortening flow $\frac{\partial\gamma}{\partial t}
=\kappa\mathbf{n}$, introduced by Mullins~\cite{23} to describe the motion of grain boundaries, can be regarded as the gradient flow of the length functional, since
$\frac{dL}{dt}= -\int_{\gamma}\kappa^2ds$.
Gage and Hamilton~\cite{7} showed that any embedded closed convex initial curve shrinks to a point in finite time and becomes asymptotically circular after suitable rescaling.
Later, Grayson~\cite{15} extended this result to arbitrary embedded closed initial curves.

Due to the anisotropy of crystalline materials, Mullins' theory was generalized by Gurtin~\cite{17,18}, Angenent and Gurtin~\cite{3,4} and Taylor--Cahn~\cite{29}; see also the monograph of Gurtin~\cite{16}.
A basic anisotropic curvature flow is given by $V=\Psi(\theta)\kappa$, where $\theta$ is the tangent angle of the evolving curve and $\Psi$ is a positive, smooth, $2\pi$-periodic function.
Gage--Li~\cite{10} and Chou--Zhu~\cite{6} investigated this flow and showed that the evolving curve shrinks to a point in finite time and is asymptotic to a self-similar solution.
As a natural generalization, Andrews~\cite{1} considered the power-type curvature flow  $V=\Psi(\theta)\kappa^{\alpha}$ and classified its behavior according to the value of $\alpha$.

More generally, we consider the weighted nonlocal anisotropic curvature flow
\begin{equation}\label{1.2}
\left\{
\begin{array}{l}
\displaystyle
\frac{\partial\gamma}{\partial t}
=\operatorname{sgn}(\alpha)m(\theta)
\left(\kappa_{\sigma}^{\alpha}-\lambda(t)\right)\mathbf{n},
\\
\gamma(\cdot,0)=\gamma_0.
\end{array}
\right.
\end{equation}
Here $m=m(\theta)>0$ is a smooth $2\pi$-periodic function satisfying
$m+m_{\theta\theta}>0$ and $\lambda(t)$ is the nonlocal term defined by
\begin{equation*}
  \lambda(t)
  =\frac{\int_{\gamma}m\kappa_{\sigma}^{\alpha+1}ds}{\int_{\gamma}m\kappa_{\sigma}ds}.
\end{equation*}

When $\alpha>0$ and $m\equiv1$ in \eqref{1.2}, the weighted flow reduces to the
anisotropic-length-preserving curvature equation studied by Liu--Tsai--Wang~\cite{20}, based on the work of \v{S}ev\v{c}ovi\v{c}--Yazaki~\cite{28}:
\begin{equation}\label{1.3}
  \left\{
  \begin{array}{l}
  \displaystyle
  \dfrac{\partial \gamma}{\partial t}
  =\left(\kappa_{\sigma}^{\alpha}
  -\dfrac{\int_{\gamma}\kappa_{\sigma}^{\alpha+1}ds}{\int_{\gamma}\kappa_{\sigma}ds}
  \right)\mathbf{n},
  \\
  \gamma(\cdot,0)=\gamma_0.
  \end{array}
  \right.
\end{equation}
They proved that \eqref{1.3} admits a global smooth solution and that the evolving curves converge smoothly to the boundary of a homothetic Wulff shape as $t\to\infty$.

Related area-preserving and length-preserving curvature flows have been studied by Gage~\cite{8}, Jiang--Pan~\cite{19}, Ma--Cheng~\cite{21}, Ma--Zhu~\cite{22}, Tsai--Wang~\cite{30} and Wang~\cite{32}.
For their anisotropic counterparts in the planar setting, we refer to
Benes--Yazaki--Kimura~\cite{5}, Gao--Zhang~\cite{11}, Liu--Tsai--Wang~\cite{20} and
Pan--Yang~\cite{24}.
In higher dimensions, volume-preserving and locally constrained anisotropic curvature flows have been studied by Wei--Xiong~\cite{33,34}.

More recently, Andrews--Lei--Wei--Xiong~\cite{2} studied volume-preserving flows of smooth, closed, strictly convex hypersurfaces driven by positive powers of the $k$-th anisotropic mean curvature.
They established global existence and convergence to the Wulff shape for all $\alpha>0$.
In particular, their results cover the area-preserving anisotropic curvature flow in the plane, i.e.,
\begin{equation*}
\frac{\partial\gamma}{\partial t}
=\sigma(\theta)\left(\kappa_{\sigma}^{\alpha}
-\frac{\int_{\gamma}\sigma\kappa_{\sigma}^{\alpha}ds}{\int_{\gamma}\sigma ds}
\right)\mathbf{n}.
\end{equation*}
For this flow, they proved exponential convergence of the evolving curves to the Wulff shape.

In this paper, we prove that the flow \eqref{1.7} admits a global smooth solution for any $\alpha>0$.
This result extends the anisotropic length-preserving flow studied by Sun~\cite{26}, which corresponds to the special case $\alpha=1$, $m=\sigma(\theta)$ and the additional symmetry condition $\sigma(\theta+\pi)=\sigma(\theta)$.
Under these assumptions, Sun established the global existence and smooth convergence of the flow.

\subsection{Definitions and notation}

Let $\gamma$ be a smooth, embedded, closed, convex plane curve parametrized by arc length $s$, where embeddedness means that $\gamma$ has no self-intersections.
Denote its unit tangent vector by  $\mathbf{t}=\gamma_s$, and orient $\gamma$ so that the inward unit normal is given by $\mathbf{n}=R_{\frac{\pi}{2}}\mathbf{t}$, where $R_{\frac{\pi}{2}}$ denotes counterclockwise rotation through an angle of $\frac{\pi}{2}$ in $\mathbb{R}^2$.
With the above choice of orientation, we write $\mathbf{t}=-(\cos\theta,\sin\theta),
\mathbf{n}=(\sin\theta,-\cos\theta).$
With this orientation, the Frenet formula takes the form $\gamma_{ss}=\kappa\mathbf{n}$,
and hence  $\mathbf{n}=\frac{\gamma_{ss}}{\|\gamma_{ss}\|}, \kappa=\|\gamma_{ss}\|$.
Throughout, $\gamma$ is said to be convex if $\kappa>0$ everywhere.

When $\gamma$ is convex, it can be parametrized by its tangent angle
$\theta\in S^1:=\mathbb R/2\pi\mathbb Z$.
The relation among the arc-length parameter $s$, the tangent angle $\theta$ and the curvature $\kappa$ is given by
\begin{equation*}
  \kappa\,ds=d\theta.
\end{equation*}
 Define the support function of $\gamma$ with respect to
the origin $O$ by
\begin{equation*}
  h(\theta)=-\langle\gamma,\mathbf{n}\rangle.
\end{equation*}
Thus, $h$ is the scalar projection of the position vector of $\gamma$
onto the outer unit normal vector $-\mathbf{n}$. The curvature and the
support function are related by
\begin{equation*}
  \kappa(\theta)=\frac{1}{h(\theta)+h''(\theta)}.
\end{equation*}

By Proposition~2.1 of Chou--Zhu~\cite{6}, any smooth $2\pi$-periodic
function $h(\theta)$ satisfying  $h''+h>0$
determines a smooth, embedded, closed, convex plane curve by
$\gamma(\theta)= \bigl( -h(\theta)\sin\theta-h'(\theta)\cos\theta,\,
  h(\theta)\cos\theta-h'(\theta)\sin\theta\bigr).$
Moreover, two curves determined by $h$ and $\widetilde{h}$ differ by a
translation if and only if $h-\widetilde{h}=C_1\cos\theta+C_2\sin\theta$
for some constants $C_1$ and $C_2$.

The Euclidean length $L$ of $\gamma$ is given by
\begin{equation*}
  L =\int_{\gamma}ds
  =\int_0^{2\pi}\frac{1}{\kappa}\,d\theta
  = \int_0^{2\pi}h\,d\theta,
\end{equation*}
while the enclosed area $A$ is given by
\begin{equation*}
  A =\frac{1}{2}\int_{\gamma}\langle\gamma,-\mathbf{n}\rangle\,ds
  =\frac{1}{2}\int_{\gamma}h\,ds
  =\frac{1}{2}\int_0^{2\pi}\left(h^2-(h')^2\right)\,d\theta.
\end{equation*}

Let $\sigma$ be a given positive smooth function satisfying
\begin{equation}\label{1.4}
   \sigma(\theta+2\pi)=\sigma(\theta),
   \qquad
  \sigma''(\theta)+\sigma(\theta)>0,
  \qquad
  \theta\in S^1.
\end{equation}
Set
$\varphi(\theta)=\sigma''(\theta)+\sigma(\theta)$.
By \eqref{1.4}, $\varphi>0$ on $S^1$. We denote
\begin{equation*}
  \Lambda_0=\min_{\theta\in S^1}\varphi(\theta),
  \qquad
  \Lambda_1=\max_{\theta\in S^1}\varphi(\theta),
\end{equation*}
so that
$0<\Lambda_0\leq\varphi(\theta)\leq\Lambda_1$ for all $\theta\in S^1$.

The anisotropic curvature of $\gamma$ associated with $\sigma(\theta)$ is defined by
\begin{equation}\label{1.5}
  \kappa_{\sigma}(\theta)=\varphi(\theta)\kappa(\theta)
  =\bigl(\sigma''(\theta)+\sigma(\theta)\bigr)\kappa(\theta),
\end{equation}
and the anisotropic length is defined by
\begin{equation*}
  L_{\sigma}=\int_{\gamma}\sigma ds=\int_0^{2\pi}\frac{\sigma}{\kappa} d\theta.
\end{equation*}
Moreover,
\begin{equation*}
  \int_{\gamma}\kappa_{\sigma} ds
  =\int_{\gamma}(\sigma+\sigma'')\kappa ds
  =\int_0^{2\pi}(\sigma+\sigma'') d\theta
  =\int_0^{2\pi}\sigma  d\theta,
\end{equation*}
which is independent of $\gamma$.

The definition of the Wulff shape is taken from~\cite{28}. We define the
Wulff shape as the intersection of the half-spaces containing the origin
$O$, which are determined by a family of hyperplanes:
\begin{equation*}
  W_\sigma
  =\bigcap_{\theta\in S^1}
  \left\{\mathbf{x}\in\mathbb{R}^2:-\mathbf{x}\cdot\mathbf{n}(\theta)
  \leq\sigma(\theta)\right\},
\end{equation*}
where $\sigma(\theta)$ is a given function satisfying \eqref{1.4}. If the
boundary of the Wulff shape $W_\sigma$ is smooth, it can be parametrized
as follows:
\begin{equation*}
  \partial W_\sigma
  =\left\{\mathbf{x}:\mathbf{x}
  =-\sigma(\theta)\mathbf{n}+\sigma'(\theta)\mathbf{t},
  \quad
  \theta\in S^1
  \right\},
\end{equation*}
and its support function is $\sigma(\theta)$. The curvature of
$\partial W_\sigma$ is given by
$\kappa=(\sigma(\theta)+\sigma''(\theta))^{-1}
=\varphi(\theta)^{-1}$,
so the anisotropic curvature $\kappa_\sigma$ of $\partial W_\sigma$
is equal to the constant $1$. Moreover, the area $|W_\sigma|$ of the
Wulff shape $W_\sigma$ satisfies
\begin{equation}\label{1.6}
  |W_\sigma|
  =-\frac{1}{2}\int_{\partial W_{\sigma}}\mathbf{x}\cdot\mathbf{n}ds
  =\frac{1}{2}\int_{\partial W_{\sigma}} \sigma ds
  =\frac{1}{2}\int_0^{2\pi}\sigma\varphi  d\theta
  =\frac{1}{2}L_{\sigma}(\partial W_{\sigma}).
\end{equation}
In particular, $|W_1|=\pi$ if $\sigma\equiv1$.

\subsection{Main theorem}

In this paper, we study the power-type nonlocal anisotropic curvature flow \eqref{1.2} of smooth, convex, closed plane curves with $\alpha>0$ under an anisotropic-length-preserving (ALP) constraint, namely,
\begin{equation}\label{1.7}
  \left\{
  \begin{array}{l}
  \displaystyle
  \dfrac{\partial \gamma}{\partial t}
  =m\left(\kappa_{\sigma}^{\alpha}-\dfrac{\int_{\gamma}m\kappa_{\sigma}^{\alpha+1}ds}
  {\int_{\gamma}m\kappa_{\sigma}ds}\right)\mathbf{n},
  \\[2ex]
  \gamma(\cdot,0)=\gamma_0.
  \end{array}
  \right.
\end{equation}
Here $\alpha>0$, and $m=m(\theta)$ is a smooth positive $2\pi$-periodic function satisfying $m+m_{\theta\theta}>0$ on $S^1$.
Moreover, $\gamma=\gamma(u,t)$, where $u$ is independent of time and $\mathbf{n}$ denotes the inward unit normal vector.

Our main theorem is the following.

\begin{theorem}
Let $\gamma_0$ be a smooth, embedded, closed, convex plane curve.
Then the flow \eqref{1.7} admits a global smooth solution, and the
evolving curves remain convex for all
$t\geq0$. Along the flow \eqref{1.7}, the anisotropic length is
preserved and the enclosed area is nondecreasing. Moreover, as
$t\to\infty$, the anisotropic curvature
$\kappa_\sigma$ of the evolving curves converges smoothly to the
anisotropic curvature of the boundary of the corresponding homothetic
Wulff shape determined by $\sigma$.
\end{theorem}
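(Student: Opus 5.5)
We will work throughout in the tangent-angle parametrization, which is available as long as the curve stays convex.

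\textbf{Step 1: evolution equations.} Along a normal flow with speed $V$, the curvature satisfies $\kappa_t=\kappa^2(V_{\theta\theta}+V)$, after adding a tangential term so that $\theta$ is time-independent. Multiplying by $\varphi$, the anisotropic curvature $u:=\kappa_\sigma$ satisfies
\begin{equation*}
u_t=\frac{u^2}{\varphi}\Bigl(\bigl(m(u^\alpha-\lambda)\bigr)_{\theta\theta}+m(u^\alpha-\lambda)\Bigr).
\end{equation*}
This is a uniformly parabolic, nonlocal equation as long as $u$ stays between positive constants, so short-time existence follows from standard theory. Equivalently, one can write a fully nonlinear parabolic equation for the support function $h$.

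\textbf{Step 2: the monotonicity claims.} For the anisotropic length,
\begin{equation*}
\frac{dL_\sigma}{dt}=-\int_\gamma(\sigma+\sigma'')\kappa V\,ds=-\int_\gamma m\kappa_\sigma(\kappa_\sigma^\alpha-\lambda)\,ds=0
\end{equation*}
by the choice of $\lambda$. For the enclosed area,
\begin{equation*}
\frac{dA}{dt}=-\int_\gamma V\,ds=\lambda\int_\gamma m\,ds-\int_\gamma m\kappa_\sigma^\alpha\,ds.
\end{equation*}
This is nonnegative if and only if
\begin{equation*}
\int_\gamma m\kappa_\sigma^{\alpha+1}\,ds\int_\gamma m\,ds\ \ge\ \int_\gamma m\kappa_\sigma^\alpha\,ds\int_\gamma m\kappa_\sigma\,ds .
\end{equation*}
This is Chebyshev's inequality for the two increasing functions $\kappa_\sigma^\alpha$ and $\kappa_\sigma$ of $\kappa_\sigma$, with respect to the measure $m\,ds$. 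Equality holds only if $\kappa_\sigma$ is constant.

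Combining these with the anisotropic isoperimetric (Wulff) inequality $L_\sigma^2\ge 4|W_\sigma|A$, we get
\begin{equation*}
A(0)\le A(t)\le \frac{L_\sigma(0)^2}{4|W_\sigma|}.
\end{equation*}
Since $L\le L_\sigma/\min\sigma$, the Euclidean length is also bounded above. By Bonnesen-type estimates, the inradius is bounded below by a constant $\rho>0$ depending only on $\gamma_0$.

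\textbf{Step 3: curvature bounds (main obstacle).} The key step is a uniform upper bound for $\kappa_\sigma$, together with a positive lower bound that preserves convexity.

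For the upper bound, I would adapt Tso's technique. On a time interval where a fixed disk of radius $\rho/2$ stays inside the curve, place the origin at its center, so that $h\ge\rho/2$. Then apply the maximum principle to
\begin{equation*}
\Phi=\frac{m\kappa_\sigma^\alpha}{h-\delta},\qquad \delta=\rho/4 .
\end{equation*}
Since $\lambda\ge0$ and $h_t=-V$, the good term $-\kappa^2\Phi^2$-type produced by $h_t$ should dominate, giving $\Phi\le C$. The inradius bound lets us restart on successive intervals of fixed length, which makes the bound uniform in time. This also bounds $\lambda\le\max\kappa_\sigma^\alpha$.

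For the lower bound, at a spatial minimum of $u$ the equation gives
\begin{equation*}
u_t\ge \frac{u^2}{\varphi}\Bigl((m+m'')u^\alpha-\lambda(m+m'')\Bigr)+\text{(terms handled at the minimum)}.
\end{equation*}
A crude bound only yields exponential-in-time decay. To get a time-independent lower bound, I would instead use an integral estimate on $\int\log\kappa\,d\theta$ or on the entropy, in the spirit of Gage--Hamilton and Liu--Tsai--Wang, together with a Harnack/gradient bound for $u_\theta$. This is where I expect the real work: the weight $m$ and the nonlocal $\lambda$ spoil the clean structure of the case $m\equiv1$, and the assumption $m+m''>0$ is what must be exploited.

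\textbf{Step 4: regularity and convergence.} With $0<c\le\kappa_\sigma\le C$, the equation is uniformly parabolic. Krylov--Safonov and Schauder estimates then give uniform $C^k$ bounds, hence global existence.

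Integrating $dA/dt$ over $[0,\infty)$ shows that the Chebyshev deficit is integrable in time. Since its time derivative is bounded by the $C^k$ bounds, the deficit tends to $0$. Hence every subsequential limit has constant $\kappa_\sigma=c$. This constant is uniquely determined:
\begin{equation*}
L_\sigma=\int_0^{2\pi}\frac{\sigma\varphi}{\kappa_\sigma}\,d\theta=\frac{2|W_\sigma|}{c},\qquad\text{so}\qquad c=\frac{2|W_\sigma|}{L_\sigma(\gamma_0)}.
\end{equation*}
Uniqueness of the limit upgrades subsequential convergence to smooth convergence of $\kappa_\sigma$ to the anisotropic curvature of the homothetic Wulff shape $c^{-1}\partial W_\sigma$.
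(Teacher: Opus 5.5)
\textbf{Verdict: there is a genuine gap.} Step 4 depends on a time-independent lower bound $\kappa_\sigma\ge c>0$ that you never prove. The paper shows that no such a priori bound is needed.

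Your Steps 1--2 and the Tso-type upper bound follow the paper; Chebyshev in place of H\"older for $dA/dt\ge0$ is fine. Two small corrections to the Tso step:
\begin{itemize}
\item In the maximum-principle computation for $\Phi$, the term produced by $h_t=-V$ is the \emph{bad} one, $m^2\omega(\omega-\lambda)/(h-\delta)^2\le\Phi^2$ with $\omega=\kappa_\sigma^\alpha$. The good term $-\alpha\delta\kappa\Phi^2\sim-\Phi^{2+1/\alpha}$ comes from $h_{\theta\theta}+h=1/\kappa$ together with the shift $\delta$.
\item The fixed-length interval on which the disk stays inside needs a comparison with a circle shrinking at speed $\max_{S^1}(m\varphi^\alpha)\,\kappa^\alpha$. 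This comparison uses $\lambda\ge0$.
\end{itemize}

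The gap itself: Step 4 rests entirely on the bound $\kappa_\sigma\ge c>0$, and for it you only point to an unspecified entropy or $\int\log\kappa\,d\theta$ estimate. Without it you have no uniform $C^k$ bounds, so no control of $d^2A/dt^2$. You have no compactness either, since you also never establish a uniform bound on $\partial_\theta\kappa_\sigma$. Nor do you know that a subsequential limit of $\kappa_\sigma$ is positive, so the equality case of Chebyshev cannot be applied. For $\alpha>1$ there is no evident way to get such a bound before knowing convergence. Global existence alone is not affected: the exponential-in-time lower bound you mention suffices for continuation on bounded intervals, which is exactly how the paper handles $\alpha>1$.

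The paper's route avoids any a priori lower bound:
\begin{itemize}
\item \emph{Gradient bound.} $\omega=\kappa_\sigma^\alpha$ has a uniform gradient bound regardless of degeneracy. Apply the maximum principle to $\omega_\theta+\beta\omega$ with $\beta$ large. The reaction term has the form $\omega^{1/\alpha}\bigl(-A\omega_\theta^2+B|\omega_\theta|+C\bigr)$, so the factor $\omega^{1/\alpha}$ is harmless.
\item \emph{Decay of $dA/dt$.} With $\omega$, $\omega_\theta$ and $\lambda$ bounded, and $\int_0^{2\pi}m\varphi\,\omega^{-1/\alpha}d\theta=\int_\gamma m\,ds\le C\,L_\sigma(0)$, differentiate $dA/dt$ using the equation and integrate by parts. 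This gives $|d^2A/dt^2|\le C$, hence $dA/dt\to0$.
\item \emph{Identifying the limit.} Arzel\`a--Ascoli gives $\kappa_\sigma(\cdot,t_i)\to\kappa_\sigma^*\ge0$ uniformly along a subsequence. Fatou's lemma applied to $\int_0^{2\pi}m\varphi/\kappa_\sigma(\cdot,t_i)\,d\theta$ shows $\kappa_\sigma^*>0$ almost everywhere. Andrews' inequality with $F(\xi)=(\xi^{1/\alpha}+\varepsilon)^{-1}$, letting $\varepsilon\to0$, combined with $\lim_i dA/dt(t_i)=0$, forces $\kappa_\sigma^*$ to be constant, namely $2|W_\sigma|/L_\sigma(0)$.
\item \emph{Lower bound a posteriori.} Only now does a positive lower bound follow, from the uniform convergence. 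The higher-order estimates then upgrade this to $C^k$ convergence.
\end{itemize}
For $0<\alpha\le1$ the gradient bound plus bounded $L$ even gives the lower bound directly. Indeed $|(\kappa^{\alpha-1})_s|$ (resp.\ $|(\log\kappa)_s|$ when $\alpha=1$) is bounded and $\max\kappa$ is bounded below.
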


We organize the following sections in this way.  In Section~2, we
derive the evolution equations for the relevant geometric quantities,
establish the preservation of the anisotropic length and the
monotonicity of the enclosed area, and prove the preservation of
convexity and the local existence of solutions to \eqref{1.7}. In Section~3,
we derive estimates for the support function and the anisotropic
curvature by Tso's method, obtain derivative estimates by the Bernstein
technique, and establish the global existence of the solutions. In
Section~4, we study the long-time behavior of the anisotropic curvature
and prove its smooth convergence to the corresponding constant limit.
\\

\noindent {\bf Acknowledgments}. We would like to thank Yong Wei for his valuable comments and suggestions.

\section{Preparation}

A  convex curve $\gamma$ can be parametrized by its tangent angle
$\theta$. Thus, for a family of  convex curves $\gamma(u,t)$,
the tangent angle may be used as a parameter for $\gamma(\cdot,t)$ at
each fixed time $t$. In general, however, $\theta$ depends on both $u$
and $t$. By introducing a suitable tangential velocity $G\mathbf{t}$,
we may reparametrize the evolving curves so that $\theta$ is independent
of time. Accordingly, we consider the reparametrized flow
\begin{equation}\label{2.1}
  \left\{
  \begin{array}{ll}
  \displaystyle
  \dfrac{\partial\gamma}{\partial t}(u,t)=V\mathbf{n}+G\mathbf{t},
  & \text{in } S^1\times(0,T),\\[2mm]
  \gamma(u,0)=\gamma_0(u),
  & \text{on } S^1.
  \end{array}
  \right.
\end{equation}
Here $V=m(\theta)(\kappa_{\sigma}^{\alpha}-\lambda(t))$, where the nonlocal term $\lambda(t)$ is specified below.
The tangential velocity $G$ will be chosen so that the tangent angle $\theta$ remains independent of time.
As is well known (see, for example,~\cite{6,8}), the addition of a tangential
velocity does not affect the geometric evolution of the curve.
Indeed, the original flow and the reparametrized flow differ only by a time-dependent reparametrization.

Let $g=\|\frac{\partial\gamma}{\partial u}\|$.
Following~\cite{6}, we have
\begin{equation}\label{2.2}
  \begin{aligned}
  \frac{\partial g}{\partial t}
  &=\left(-\kappa V+G_s\right)g,
  \\
  \frac{\partial\theta}{\partial t}
  &=V_s+\kappa G,
  \\
  \frac{\partial\kappa}{\partial t}
  &=\left(V\kappa-G_s\right)\kappa+\left(V_s+G\kappa\right)_s.
  \end{aligned}
\end{equation}
Choosing $G=-\frac{V_s}{\kappa}$, we obtain $\frac{\partial\theta}{\partial t}=0$.
Hence, $\theta$ may be regarded as a time-independent parameter along the reparametrized flow.

For the flow \eqref{1.7}, the nonlocal term is given by
\begin{equation}\label{2.3}
  \lambda(t)
  =\frac{\int_{\gamma}m\kappa_{\sigma}^{\alpha+1}ds}{\int_{\gamma}m\kappa_{\sigma}ds}
  =\frac{\int_0^{2\pi}m\varphi\kappa_{\sigma}^{\alpha}d\theta}{\int_0^{2\pi}m\varphi d\theta}.
\end{equation}

Using $\theta$ as a time-independent parameter, the reparametrized flow
\eqref{2.1} is equivalent to the following evolution problem for the
curvature $\kappa(\theta,t)$:
\begin{equation}\label{2.4}
  \left\{
  \begin{array}{ll}
  \displaystyle
  \frac{\partial\kappa}{\partial t}
  =\kappa^2(m(\kappa_{\sigma}^{\alpha})_{\theta\theta}
  +2m_{\theta}(\kappa_{\sigma}^{\alpha})_{\theta}
  +(m+m_{\theta\theta})(\kappa_{\sigma}^{\alpha}-\lambda(t))),
  & (\theta,t)\in S^1\times(0,T),
  \\[2mm]
  \kappa(\theta,0)=\kappa_0(\theta),
  & \theta\in S^1,
  \end{array}
  \right.
\end{equation}
where $\kappa_0(\theta)>0$ denotes the curvature of the initial curve
$\gamma_0$.

Set
\begin{equation*}
  \omega=\kappa_{\sigma}^{\alpha},
  \qquad
  \omega_0=(\varphi\kappa_0)^\alpha.
\end{equation*}
Recalling that $\varphi(\theta)=\sigma(\theta)+\sigma''(\theta)$,
problem \eqref{2.4} can be rewritten equivalently as
\begin{equation}\label{2.5}
  \left\{
  \begin{array}{ll}
  \displaystyle
  \frac{\partial\omega}{\partial t}
  =\frac{\alpha}{\varphi}\omega^{1+\frac{1}{\alpha}}\left(m\omega_{\theta\theta}
  +2m_{\theta}\omega_{\theta}+(m+m_{\theta\theta})(\omega-\lambda(t))\right),
  & (\theta,t)\in S^1\times(0,T),
  \\[2mm]
  \omega(\theta,0)=\omega_0(\theta),
  & \theta\in S^1.
  \end{array}
  \right.
\end{equation}

Before studying the monotonicity of the relevant geometric quantities, we recall several geometric inequalities that will be used below.
For an embedded closed plane curve, the classical isoperimetric inequality states that its Euclidean length $L$ and enclosed area $A$ satisfy
\begin{equation*}
  L^2\geq4\pi A.
\end{equation*}

Its anisotropic counterpart is the anisotropic isoperimetric inequality \cite[Theorem~2.8]{9}.
More precisely, let $\sigma$ be a positive smooth $2\pi$-periodic function satisfying
$\sigma''+\sigma>0$.
Then, for any embedded closed convex curve $\gamma$, its anisotropic length $L_{\sigma}$ and enclosed area $A$ satisfy
\begin{equation}\label{2.6}
  L_{\sigma}^2 \geq 4|W_{\sigma}|A,
\end{equation}
where $|W_{\sigma}|$ denotes the area of the Wulff shape $W_\sigma$
associated with $\sigma$.

Finally, we define the anisoperimetric ratio of $\gamma(t)$ by
\begin{equation}\label{2.7}
  \Pi_{\sigma}(t)=\frac{L_{\sigma}^2(t)}{4|W_{\sigma}|A(t)}.
\end{equation}
By the anisotropic isoperimetric inequality \eqref{2.6}, we have
$\Pi_{\sigma}(t)\geq1$.

We shall also need the following consequence of H\"older's inequality.

\begin{lemma}\label{1}
Let $\gamma$ be a smooth, embedded, closed, convex plane curve, and let $m>0$ be a smooth $2\pi$-periodic function satisfying $m''+m>0$.
Then for any $\alpha>0$, we have
\begin{equation}\label{2.8}
  \int_{\gamma}m\kappa_{\sigma}ds\int_{\gamma}m\kappa_{\sigma}^{\alpha}ds
  \leq
  \int_{\gamma}mds\int_{\gamma}m\kappa_{\sigma}^{\alpha+1}ds.
\end{equation}
\end{lemma}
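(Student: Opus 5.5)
This is a Chebyshev-type (correlation) inequality for the probability measure $d\mu = m\,ds/\int_\gamma m\,ds$ on $\gamma$. The inequality \eqref{2.8} says exactly that
\begin{equation*}
\int \kappa_\sigma\,d\mu \int \kappa_\sigma^{\alpha}\,d\mu \le \int \kappa_\sigma^{\alpha+1}\,d\mu .
\end{equation*}
This holds because $x\mapsto x$ and $x\mapsto x^\alpha$ are both increasing on $(0,\infty)$ for $\alpha>0$. Positivity of $m$ is all that is needed; the hypothesis $m''+m>0$ plays no role.

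I would prove it by the symmetrization trick rather than through H\"older directly. Write $f=\kappa_\sigma>0$, a smooth function on $\gamma$, and consider the double integral
\begin{equation*}
I=\int_\gamma\int_\gamma m(x)m(y)\bigl(f(x)-f(y)\bigr)\bigl(f(x)^\alpha-f(y)^\alpha\bigr)\,ds_x\,ds_y .
\end{equation*}
Since $t\mapsto t^\alpha$ is increasing for $\alpha>0$, the two factors $f(x)-f(y)$ and $f(x)^\alpha-f(y)^\alpha$ have the same sign pointwise. Together with $m>0$, this gives $I\ge0$.

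Next I expand the product and use the symmetry in $x,y$. This gives
\begin{equation*}
I=2\int_\gamma m\,ds\int_\gamma m f^{\alpha+1}ds-2\int_\gamma mf\,ds\int_\gamma mf^\alpha ds .
\end{equation*}
Combining this with $I\ge 0$ yields \eqref{2.8} immediately.

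If one prefers the H\"older route the paper alludes to, a second option is available. Set $p=\alpha+1$ and apply H\"older with exponents $(\alpha+1)$ and $(\alpha+1)/\alpha$ to the measure $m\,ds$. This gives the two bounds
\begin{equation*}
\int mf\le \Bigl(\int m\Bigr)^{\alpha/(\alpha+1)}\Bigl(\int mf^{\alpha+1}\Bigr)^{1/(\alpha+1)},\qquad
\int mf^\alpha\le \Bigl(\int m\Bigr)^{1/(\alpha+1)}\Bigl(\int mf^{\alpha+1}\Bigr)^{\alpha/(\alpha+1)}.
\end{equation*}
Multiplying the two bounds gives \eqref{2.8} as well.

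There is no real obstacle here. The only point requiring care is checking that the exponents in the H\"older step sum correctly to produce $\int m\,ds\int mf^{\alpha+1}ds$, and that step is routine.
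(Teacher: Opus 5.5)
Your proof is correct, and your second (H\"older) argument is exactly the one in the paper. The paper factors $m\kappa_\sigma=m^{\frac{\alpha}{\alpha+1}}(m\kappa_\sigma^{\alpha+1})^{\frac{1}{\alpha+1}}$ and $m\kappa_\sigma^{\alpha}=m^{\frac{1}{\alpha+1}}(m\kappa_\sigma^{\alpha+1})^{\frac{\alpha}{\alpha+1}}$, applies H\"older to each, and multiplies.

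Your primary argument takes a genuinely different route: Chebyshev's correlation inequality by symmetrization. Your expansion $I=2\int_\gamma m\,ds\int_\gamma m\kappa_\sigma^{\alpha+1}ds-2\int_\gamma m\kappa_\sigma\,ds\int_\gamma m\kappa_\sigma^{\alpha}ds$ is right. The two routes compare as follows.

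\begin{itemize}
\item The H\"older route exploits the specific power structure. $\kappa_\sigma$ and $\kappa_\sigma^{\alpha}$ sit between $\kappa_\sigma^{0}$ and $\kappa_\sigma^{\alpha+1}$ with complementary exponents, so the two bounds multiply to exactly the right-hand side.
\item Symmetrization uses only that $t\mapsto t$ and $t\mapsto t^{\alpha}$ are both increasing. It therefore works for any pair of similarly ordered functions of $\kappa_\sigma$. It also writes the deficit as an explicit nonnegative double integral, which makes the equality case ($\kappa_\sigma$ constant, since $m>0$) immediate.
\item Symmetrization is the same mechanism the paper uses later. \eqref{2.8} is Andrews' inequality (Lemma~\ref{13}) with the weighted measure $d\mu=m\,ds$ and $F(\xi)=-\xi^{\alpha}$ on $\xi>0$. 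Likewise, \eqref{4.10} is a symmetrized double integral of the same kind.
\end{itemize}

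You are also right that $m''+m>0$ is never used; only $m>0$ and $\kappa_\sigma>0$ matter.
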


\begin{proof}
By H\"older's inequality, we have
\begin{equation*}
  \int_{\gamma}m\kappa_{\sigma}ds
  =\int_{\gamma}m^{\frac{\alpha}{\alpha+1}}(m\kappa_{\sigma}^{\alpha+1})^{\frac{1}{\alpha+1}}ds
  \leq
  \left(\int_{\gamma}mds\right)^{\frac{\alpha}{\alpha+1}}
  \left(\int_{\gamma}m\kappa_{\sigma}^{\alpha+1}ds\right)^{\frac{1}{\alpha+1}},
\end{equation*}
and
\begin{equation*}
  \int_{\gamma}m\kappa_{\sigma}^{\alpha}ds
  =\int_{\gamma}m^{\frac{1}{\alpha+1}}(m\kappa_{\sigma}^{\alpha+1})^{\frac{\alpha}{\alpha+1}}ds
  \leq
  \left(\int_{\gamma}mds\right)^{\frac{1}{\alpha+1}}
  \left(\int_{\gamma}m\kappa_{\sigma}^{\alpha+1}ds\right)^{\frac{\alpha}{\alpha+1}}.
\end{equation*}
Multiplying the above two inequalities together yields \eqref{2.8}.
\end{proof}

\begin{lemma}\label{2}
For the ALP flow \eqref{1.7}, the anisotropic length $L_{\sigma}(t)$ is preserved while the enclosed area $A(t)$ is nondecreasing in time.
\end{lemma}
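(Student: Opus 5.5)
The plan is to compute both time derivatives with the support function, using the tangent angle $\theta$ as a time-independent parameter as in the reparametrized flow \eqref{2.1}. A tangential velocity does not change the geometric curves, so it does not affect $L_\sigma$ or $A$, and we may work with the normal speed $V=m(\kappa_\sigma^\alpha-\lambda(t))$ only.

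\emph{Evolution of the support function.} Since $h=-\langle\gamma,\mathbf n\rangle$ and $\mathbf n=\mathbf n(\theta)$ does not depend on $t$ when $\theta$ is fixed, we get
\[
h_t=-\langle \gamma_t,\mathbf n\rangle=-V .
\]
The tangential part drops out because $\langle\mathbf t,\mathbf n\rangle=0$.

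\emph{Anisotropic length.} Since $1/\kappa=h+h''$, integrating by parts twice on $S^1$ gives
\[
L_\sigma=\int_0^{2\pi}\sigma(h+h'')\,d\theta=\int_0^{2\pi}(\sigma+\sigma'')h\,d\theta=\int_0^{2\pi}\varphi h\,d\theta .
\]
Differentiating in time and converting back with $d\theta=\kappa\,ds$ and $\kappa_\sigma=\varphi\kappa$, we obtain
\[
\frac{dL_\sigma}{dt}=-\int_0^{2\pi}\varphi V\,d\theta=-\int_\gamma \kappa_\sigma V\,ds
=-\int_\gamma m\kappa_\sigma^{\alpha+1}ds+\lambda(t)\int_\gamma m\kappa_\sigma\,ds .
\]
This vanishes exactly by the choice of $\lambda(t)$ in \eqref{2.3}, so $L_\sigma$ is preserved.

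\emph{Enclosed area.} We use $A=\tfrac12\int_0^{2\pi}h(h+h'')\,d\theta$. Differentiating and using the self-adjointness of $h\mapsto h+h''$ on $S^1$ (again two integrations by parts) gives
\[
\frac{dA}{dt}=\int_0^{2\pi}h_t(h+h'')\,d\theta=-\int_0^{2\pi}\frac{V}{\kappa}\,d\theta=-\int_\gamma V\,ds .
\]
Inserting $V$ and the definition of $\lambda$ yields
\[
\frac{dA}{dt}=\frac{\int_\gamma m\,ds\int_\gamma m\kappa_\sigma^{\alpha+1}ds-\int_\gamma m\kappa_\sigma\,ds\int_\gamma m\kappa_\sigma^{\alpha}ds}{\int_\gamma m\kappa_\sigma\,ds}.
\]
The denominator is positive because $m>0$ and $\kappa_\sigma>0$ (convexity together with $\varphi>0$). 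The numerator is nonnegative by Lemma~\ref{1}. Hence $A$ is nondecreasing.

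The only delicate points are the sign conventions and the justification of the integrations by parts, which needs smoothness and periodicity of the solution on $S^1$. The key identities are $h_t=-V$ for the inward normal and the symmetric-operator identity behind $dA/dt$. Once these are checked, the rest is the algebra above combined with Lemma~\ref{1}.
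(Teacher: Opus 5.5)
Your proof is correct and follows the paper's argument. The paper cites the first variation formulas $\frac{dL_\sigma}{dt}=-\int_\gamma\kappa_\sigma V\,ds$ and $\frac{dA}{dt}=-\int_\gamma V\,ds$ from Chou--Zhu, whereas you derive them yourself from $h_t=-V$ and the support-function expressions for $L_\sigma$ and $A$; after that, the cancellation from the choice of $\lambda(t)$ and the nonnegativity of $dA/dt$ via Lemma~\ref{1} are identical.
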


\begin{proof}
By the first variation formula for the anisotropic length, we have
\begin{equation}\label{2.9}
  \begin{aligned}
  \frac{dL_\sigma(t)}{dt}
  &=-\int_{\gamma}\kappa_{\sigma} Vds
  \\
  &=
  -\int_{\gamma}m\kappa_{\sigma}^{\alpha+1}ds
  +\frac{\int_{\gamma}m\kappa_{\sigma}^{\alpha+1}ds}
  {\int_{\gamma}m\kappa_{\sigma}ds}\int_{\gamma}m\kappa_{\sigma}ds
  \\
  &=0.
  \end{aligned}
\end{equation}
According to the formula in Sect. 1.3 of \cite{6}, we have
\begin{equation}\label{2.10}
  \begin{aligned}
  \frac{dA(t)}{dt}
  &=-\int_{\gamma}Vds
  \\
  &=
  -\int_{\gamma}m\kappa_{\sigma}^{\alpha}ds
  +\frac{\int_{\gamma}m\kappa_{\sigma}^{\alpha+1}ds}
  {\int_{\gamma}m\kappa_{\sigma}ds}\int_{\gamma}mds
  \\
  &\geq0,
  \end{aligned}
\end{equation}
where the last inequality follows from \eqref{2.8}. Hence, the
anisotropic length is preserved and the enclosed area is nondecreasing.
\end{proof}

\begin{lemma}\label{3}
A strictly convex smooth plane curve $\gamma(0)$ which evolves according to \eqref{1.7} remains convex as long as the flow exists.
\end{lemma}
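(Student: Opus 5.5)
The plan is to run a maximum principle argument on the curvature equation \eqref{2.4}, or equivalently on \eqref{2.5}. It is cleaner to work with a quantity that blows up as $\kappa\to 0$. Since $\kappa_\sigma=\varphi\kappa$ with $\Lambda_0\le\varphi\le\Lambda_1$, it is enough to show that $\kappa_\sigma$ stays bounded below by a positive constant on any finite interval $[0,T')\subset[0,T)$.

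Set $v=\kappa_\sigma^{-\alpha}=1/\omega$. Then $\omega_\theta=-v_\theta/v^2$ and $\omega_{\theta\theta}=-v_{\theta\theta}/v^2+2v_\theta^2/v^3$. Substituting into \eqref{2.5}, and using $\omega^{1+1/\alpha}=v^{-1-1/\alpha}$ and $\partial_t v=-v^2\partial_t\omega$, gives
\begin{equation*}
\partial_t v=\frac{\alpha}{\varphi}v^{1-\frac1\alpha}\Bigl(m v_{\theta\theta}-\frac{2m v_\theta^2}{v}+2m_\theta v_\theta-(m+m_{\theta\theta})\bigl(1-\lambda(t)v\bigr)\Bigr).
\end{equation*}

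At a spatial maximum of $v(\cdot,t)$ we have $v_\theta=0$ and $v_{\theta\theta}\le 0$, and $m+m_{\theta\theta}>0$. Hence
\begin{equation*}
\frac{d}{dt}v_{\max}\le \frac{\alpha}{\varphi}(m+m_{\theta\theta})\,\lambda(t)\,v_{\max}^{2-\frac1\alpha}\le C\lambda(t)\,v_{\max}^{2-\frac1\alpha},
\end{equation*}
where the derivative is understood in Hamilton's sense (upper Dini derivative) and $C$ depends only on $m$, $\sigma$ and $\alpha$. The drop of the term $-(m+m_{\theta\theta})$ is the key sign: it is the only term driving $v$ toward infinity, and it has the favorable sign.

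The main obstacle is that this ODE bound is superlinear when $\alpha>1$, so it does not by itself rule out blow-up of $v$ before $T$. We must therefore control $\lambda(t)$ and use a sharper argument. On $[0,T')$ the solution is smooth and strictly convex, so $\lambda$ is continuous and hence bounded there. However, the comparison ODE $y'=C\Lambda y^{2-1/\alpha}$ may blow up in finite time.

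To avoid this, we argue by continuity instead. Let $t_*$ be the first time at which $\min\kappa=0$. On $[0,t_*)$, $\kappa>0$, so $\kappa$ is bounded above (by smoothness up to $t_*$, since the flow exists past $t_*$). Hence $\lambda\le\max\kappa_\sigma^\alpha\le \Lambda$ on $[0,t_*]$. A finite-time blow-up of $v$ at $t_*$ must then be reconciled with the ODE inequality. If instead $\alpha\le 1$, the exponent $2-\frac1\alpha\le1$ and Gronwall immediately gives $v_{\max}(t)\le C(T')$, which is a contradiction.

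For $\alpha>1$ I would compare directly with the equation in the form \eqref{2.4}. At a minimum of $\kappa_\sigma$ (equivalently a maximum of $v$) we get
\begin{equation*}
\partial_t\kappa_{\min}\ge -C\lambda\,\kappa_{\min}^2 .
\end{equation*}
This is linear in the sense of Gronwall for $\log\kappa$, since $\kappa_{\min}^2\le \kappa_{\min}\cdot\max\kappa$, so
\begin{equation*}
\frac{d}{dt}\log\kappa_{\min}\ge -C\lambda\max\kappa\ge -C'
\end{equation*}
on $[0,t_*)$. Therefore $\kappa_{\min}(t)\ge\kappa_{\min}(0)e^{-C't}>0$ up to $t_*$, contradicting the definition of $t_*$. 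This argument works for all $\alpha>0$, so I would present it uniformly.
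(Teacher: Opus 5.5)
Your final argument is correct and is essentially the paper's proof. On a compact time interval (you take $[0,t_*)$ with $t_*$ the first vanishing time, the paper an arbitrary $[0,t_1]$ with $t_1<T$), the curvature and hence $\lambda$ are bounded, so at a spatial minimum the reaction term is at most linear and the minimum decays at most exponentially. The paper phrases this as $|c|\le B$ for $c=\frac{\alpha(m+m_{\theta\theta})}{\varphi}\omega^{1/\alpha}(\omega-\lambda)$ and applies the maximum principle to $e^{(B+1)t}\omega$, while you apply Gronwall to $\log$ of the minimum.

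Two small fixes. First, run the minimum argument on $\kappa_\sigma$ (or $\omega$) rather than $\kappa$: since $\kappa_\sigma=\varphi\kappa$, their minima need not occur at the same $\theta$. Second, discard the $v=\omega^{-1}$ paragraph, whose equation is off by powers of $v$. Correctly one gets $\frac{d}{dt}v_{\max}\le C\lambda\,v_{\max}^{1-1/\alpha}$, which is exactly what your $\kappa_\sigma$ inequality gives after substituting $v=\kappa_\sigma^{-\alpha}$. This bound is sublinear for every $\alpha>0$, so the ``superlinear obstacle'' for $\alpha>1$ is an artifact of the algebra.
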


\begin{proof}
Suppose that the flow exists on the time interval $[0,T)$, $T\leq\infty$.
By \eqref{2.5}, the equation for $\omega=\kappa_\sigma^\alpha$ can be written as
\begin{equation*}
  \omega_t
=a(\theta,t)\omega_{\theta\theta}+b(\theta,t)\omega_{\theta}+c(\theta,t)\omega,
\qquad
(\theta,t)\in S^1\times(0,T),
\end{equation*}
where $a(\theta,t)=\frac{\alpha m}{\varphi}\omega^{1+\frac{1}{\alpha}},  b(\theta,t)=\frac{2\alpha m_\theta}{\varphi}\omega^{1+\frac{1}{\alpha}}$  and
$c(\theta,t)=\frac{\alpha(m+m_{\theta\theta})}{\varphi}
\omega^{\frac{1}{\alpha}}\bigl(\omega-\lambda(t)\bigr)$.

Fix $t_1<T$. There exists a constant $C(t_1)>0$ such that $\omega(\theta,t)\leq C(t_1)$ for $(\theta,t)\in S^1\times[0,t_1]$.
By \eqref{2.3}, we have $0<\lambda(t)\leq C(t_1)$ for all $t\in[0,t_1]$.
Since $c(\theta,t)=\frac{\alpha(m+m_{\theta\theta})}{\varphi}
\omega^{\frac{1}{\alpha}}\bigl(\omega-\lambda(t)\bigr)$, there exists a constant $B=B(t_1)>0$ such that $|c(\theta,t)|\leq B$ for $(\theta,t)\in S^1\times[0,t_1]$.

Set $\widehat{\omega}=e^{(B+1)t}\omega$.
Then $\widehat{\omega}$ satisfies
\begin{equation*}
  \widehat{\omega}_t
  =a(\theta,t)\widehat{\omega}_{\theta\theta}
  +b(\theta,t)\widehat{\omega}_{\theta}
  +\bigl(c(\theta,t)+B+1\bigr)\widehat{\omega}.
\end{equation*}
Since $|c(\theta,t)|\leq B$, we have $c(\theta,t)+B+1\geq1$.
Hence, by the maximum principle,
\begin{equation*}
  \min_{S^1}\widehat{\omega}(\theta,t)
  \geq
  \min_{S^1}\widehat{\omega}(\theta,0),
  \quad
  0<t\leq t_1.
\end{equation*}
Since the initial curve $\gamma_0$ is convex, i.e., $\omega(\theta,0)>0$, it follows that
\begin{equation*}
  \omega(\theta,t)>0,
  \quad
  (\theta,t)\in S^1\times[0,t_1].
\end{equation*}
Since $t_1<T$ is arbitrary, we have
\begin{equation*}
  \omega(\theta,t)>0,
  \quad
  (\theta,t)\in S^1\times[0,T).
\end{equation*}
\end{proof}

\begin{lemma}\label{4}
There exists $T_0>0$ such that there is a unique smooth solution to the flow \eqref{1.7} on $[0,T_0)$. Moreover, the  solution exists for all time if the curvature $\kappa$ (or equivalently, the anisotropic curvature $\kappa_{\sigma}$) of the evolving curves $\gamma(\cdot,t)$ does not blow up at finite time.
\end{lemma}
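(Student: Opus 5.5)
To prove Lemma~\ref{4}, I would work with the scalar problem \eqref{2.5} for $\omega=\kappa_\sigma^\alpha$ on $S^1$, which by the discussion before \eqref{2.4} is equivalent to the reparametrized flow \eqref{2.1} and hence, up to a tangential reparametrization, to \eqref{1.7}. The plan is to treat \eqref{2.5} as a quasilinear parabolic equation with a nonlocal zeroth-order term. I would freeze the nonlocal coefficient and run a fixed point argument in a parabolic H\"older space.

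\emph{Local existence.} Since $\omega_0=(\varphi\kappa_0)^\alpha$ is smooth and $\min\omega_0>0$, the diffusion coefficient $a=\frac{\alpha m}{\varphi}\omega^{1+1/\alpha}$ is bounded below by a positive constant for $\omega$ near $\omega_0$. Fix the closed set
\[
X=\{v\in C^{2+\beta,1+\beta/2}(S^1\times[0,T_0]) : v(\cdot,0)=\omega_0,\ \|v-\omega_0\|\le 1,\ v\ge \tfrac12\min\omega_0\}.
\]
For $v\in X$, set $\lambda_v(t)=\int m\varphi v\,d\theta/\int m\varphi\,d\theta$, which is a smooth functional of $v$ and is $C^{\beta/2}$ in $t$. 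Then solve the linear problem obtained from \eqref{2.5} by taking the coefficients $a,b,c$ and the source $\lambda_v$ from $v$. Schauder theory gives a unique solution $w$ with estimates depending only on the data. For $T_0$ small, the map $v\mapsto w$ sends $X$ into itself and is a contraction in a weaker norm (e.g.\ $C^{2+\beta',1+\beta'/2}$ with $\beta'<\beta$), so it has a fixed point. Uniqueness follows from the same contraction estimate; the nonlocal term is Lipschitz in $\sup|v_1-v_2|$, so the difference of two solutions satisfies a linear parabolic inequality to which the maximum principle plus Gronwall apply. Bootstrapping with Schauder estimates gives smoothness. Alternatively, one could cite a standard quasilinear parabolic theorem (Ladyzhenskaya--Solonnikov--Ural'tseva) as in~\cite{6,20}, noting that $\lambda(t)$ depends only on $t$.

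\emph{Continuation criterion.} Suppose the maximal time $T<\infty$ and $\kappa_\sigma\le C$ on $[0,T)$. Since $\Lambda_0\le\varphi\le\Lambda_1$, it does not matter whether we bound $\kappa$ or $\kappa_\sigma$. Then $\lambda(t)\le C$ by \eqref{2.3}. The key step, and the main obstacle, is a positive lower bound on $\omega$ uniform up to $T$; without it the equation could degenerate. The proof of Lemma~\ref{3} already gives this: with $|c|\le B$ on $[0,T)$, we get $\omega\ge e^{-(B+1)T}\min\omega_0$. With $\omega$ trapped between two positive constants, the equation is uniformly parabolic with bounded coefficients. A Bernstein-type gradient bound, or Krylov--Safonov followed by Schauder estimates (the equation can be written in divergence-like form $m\omega_{\theta\theta}+2m_\theta\omega_\theta=(m\omega_\theta)_\theta+m_\theta\omega_\theta$), gives $C^{\beta}$ and then all higher derivative bounds up to $T$. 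Hence $\omega(\cdot,t)\to\omega(\cdot,T)$ smoothly, with $\omega(\cdot,T)>0$, and restarting the local existence argument from this positive smooth datum extends the solution past $T$, which is a contradiction.

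Finally, the curve itself is recovered from $\kappa$. The support function $h$ evolves by $h_t=-V$, since $\partial_t\gamma\cdot(-\mathbf n)=-V$ when $\theta$ is fixed, so integrating $h_t=-m(\kappa_\sigma^\alpha-\lambda)$ in time gives $h$ smooth with $h+h''=1/\kappa>0$. By Proposition~2.1 of~\cite{6}, $h$ determines a smooth, embedded, closed, convex curve.
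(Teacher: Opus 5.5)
Your proposal is correct and follows essentially the paper's route. You use a fixed-point argument in parabolic H\"older spaces for \eqref{2.5}, where the paper defers to the Leray--Schauder argument of \cite{24}; your contraction in a weaker norm is an equally standard variant. You then obtain continuation from the lower bound $\omega\ge e^{-(B+1)T}\min_{S^1}\omega_0$ in the proof of Lemma~\ref{3} plus higher-derivative estimates, as the paper does. Your recovery of the curve from $h_t=-V$ is a useful extra; it is consistent because $(h+h_{\theta\theta})_t=-(V+V_{\theta\theta})=(1/\kappa)_t$ by \eqref{2.4}.

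One routine correction: the ball of radius $1$ about $\omega_0$ in $C^{2+\beta,1+\beta/2}$ is not mapped into itself for small $T_0$. The reason is that $w_t(\cdot,0)$ equals the right-hand side of \eqref{2.5} at $\omega_0$, which is fixed by the data and need not be small. Use a ball of large radius $R$ instead, and take short-time smallness only from lower-order norms of $v-\omega_0$.
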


\begin{proof}
The unique existence of the local solution to flow \eqref{1.7} can be proved by mimicking the proof in~\cite{24}, where the classical Leray-Schauder fixed point theory is employed.
The curvature of the evolving curves does not blow up in finite time means that for any given time $T>0$ the curvature $\kappa$ (or equivalently the quantity $\omega$) has an upper bound $C(T)$, where $C(T)$ is a positive constant depending on time $T$, i.e.
\begin{equation*}
0<\kappa\leq C(T),
\quad(\theta,t)\in S^1\times[0,T).
\end{equation*}
Combining with the lower bound obtained in the proof of Lemma~\ref{3}, we can use Bernstein techniques  to establish the bound for all the derivatives of $\kappa$, that is
\begin{equation*}
|\partial_{\theta}^{i}\kappa(\theta,t)|\leq C_i(T),
\quad (\theta,t)\in S^1 \times [0,T),
\quad i=1,2,3,\cdots
\end{equation*}
for some positive constants $C_i(T), i=1,2,3,\ldots$.
By local existence, we can assume that the solution exists on a maximal time interval $[0,T_{\max})$.
If $T_{\max}<\infty$, we can use the above estimates for $\partial_{\theta}^{i}\kappa(\theta,t)(i=0,1,2,3\ldots)$ to extend the solution beyond $T_{\max}$, which is a contradiction to the definition of $T_{\max}$.
\end{proof}


\section{Estimate of the curvature}

In this section, we use Tso's method~\cite{31} to estimate the bound
of curvature, and use the Bernstein technique to estimate the
derivatives of curvature.


\subsection{Estimate of the support function}

Let $h(\theta,t)$ be the support function of the evolving curves
$\gamma(\theta,t)$ under the flow \eqref{1.7},
with respect to the origin $O$ as defined in Section~1.2:
\begin{equation*}
  h(\theta,t)=-\langle\gamma(\theta,t),\mathbf {n}\rangle,
\quad
(\theta,t)\in S^1\times[0,T),
\end{equation*}
where $\theta$ is the tangent angle of $\gamma(\theta,t)$.
More generally, given a point $P=(a,b)$ in the plane, the support function $s(\theta,t)$ with respect to $P$ is defined by
\begin{equation*}
  s(\theta,t)=-\langle\gamma(\theta,t)-P,\mathbf {n}\rangle
  =h(\theta,t)+a\sin\theta-b\cos\theta,
\quad
(\theta,t)\in S^1\times[0,T).
\end{equation*}
A direct computation shows that the evolution equation for the support
function is independent of the choice of reference point.

To apply Tso's method, we first derive a two-side estimate for the support function of the evolving curves.
Let $\gamma(t)$ be an embedded, closed, convex plane curve. By Bonnesen's inequality, one has
\begin{equation*}
  rL(t)-A(t)-\pi r^2\geq0,
\end{equation*}
for all $r_{\mathrm{in}}(t)\leq r\leq r_{\mathrm{out}}(t)$, where
$r_{\mathrm{in}}(t)$ and $r_{\mathrm{out}}(t)$ denote the inradius and
outradius of the convex domain enclosed by $\gamma(t)$, respectively.
Thus, we have
\begin{equation}\label{3.1}
0<
\frac{L(t)-\sqrt{L^2(t)-4\pi A(t)}}{2\pi}
\leq
r_{\mathrm{in}}(t)
\leq
r_{\mathrm{out}}(t)
\leq
\frac{L(t)+\sqrt{L^2(t)-4\pi A(t)}}{2\pi}.
\end{equation}
Since the anisotropic length $L_{\sigma}(t)$ is preserved in time, we have
\begin{equation}\label{3.2}
L(t)+\sqrt{L^2(t)-4\pi A(t)}\leq2L(t)\leq\Lambda_2\int_{\gamma}\sigma ds
=\Lambda_2L_{\sigma}(t)\leq\Lambda_2L_{\sigma}(0),
\end{equation}
where  $\Lambda_2=\max_{\theta\in S^1}\frac{2}{\sigma(\theta)}.$
It follows that
\begin{equation}\label{3.3}
L(t)-\sqrt{L^2(t)-4\pi A(t)}
=\frac{4\pi A(t)}{L(t)+\sqrt{L^2(t)-4\pi A(t)}}
\geq
\frac{4\pi A(0)}{\Lambda_2L_{\sigma}(0)}.
\end{equation}
Combining \eqref{3.1}, \eqref{3.2} and \eqref{3.3}, we obtain
\begin{equation*}
0<2a\leq r_{\mathrm{in}}(t)\leq r_{\mathrm{out}}(t)\leq2b,
\end{equation*}
where  $2a=\frac{2A(0)}{\Lambda_2L_\sigma(0)},
2b=\frac{\Lambda_2L_\sigma(0)}{2\pi}$.
For a fixed time $t_0\in[0,T)$,  let $P_{t_0}\in\mathbb R^2$ be the center of some maximal inscribed circle of $\gamma(\cdot,t_0)$ and  $E(t_0)$ be a circle enclosed by $\gamma(\cdot,t_0)$  with radius $r(t_0)=2a$ centered at $P_{t_0}$.
From time $t_0$, let $E(t)$ evolve according to the flow
\begin{equation}\label{3.4}
\frac{\partial E}{\partial t}=\Lambda_3\kappa^\alpha\mathbf n,
\end{equation}
where $\Lambda_3=\max_{\theta\in S^1}m(\theta)\varphi^\alpha(\theta)$.
Note that each $E(t)$ is a circle as long as it exists, and a direct computation
shows that the radius of $E(t)$ satisfies
\begin{equation*}
  r(t)=\left[r^{\alpha+1}(t_0)-(\alpha+1)\Lambda_3(t-t_0)\right]^{\frac{1}{\alpha+1}},
  \quad
  t\in\left[t_0,t_0+\frac{r^{\alpha+1}(t_0)}{(\alpha+1)\Lambda_3}\right).
\end{equation*}
By comparing the velocity of the flow \eqref{3.4} with the flow \eqref{1.7}, we know that the circle $E(t)$ is enclosed by the curve $\gamma(t)$ for
$t_0\leq t<\min\left\{t_0+\frac{r^{\alpha+1}(t_0)}{(\alpha+1)\Lambda_3},T\right\}$.
Consequently, the support function $s(\theta,t)$ of $\gamma(\cdot,t)$ with respect to $P_{t_0}$ satisfies
\begin{equation*}
  s(\theta,t)\geq r(t),
   \quad
   \theta\in S^1,
   \quad
   t\in[t_0,\min\{t_0+\frac{r^{\alpha+1}(t_0)}{(\alpha+1)\Lambda_3},T\}).
\end{equation*}
Define
\begin{equation*}
  \Delta t
:=\frac{r^{\alpha+1}(t_0)}{2^{\alpha+1}(\alpha+1)\Lambda_3}
=\frac{a^{\alpha+1}}{(\alpha+1)\Lambda_3}.
\end{equation*}
Then the radius $r(t)$ of $E(t)$ satisfies
\begin{equation*}
  r(t_0)\geq r(t)\geq\frac{r(t_0)}{2}=a,
\quad
t\in[t_0,\min\{t_0+\Delta t,T\}).
\end{equation*}
Thus we have
\begin{equation*}
  s(\theta,t)\geq r(t)\geq a,
  \quad
  (\theta,t)\in S^1\times[t_0,\min\{t_0+\Delta t,T\}).
\end{equation*}
For the upper bound, we have
\begin{equation*}
  s(\theta,t)
\leq
\frac{L(t)}{2}
\leq
\frac{\Lambda_2L_\sigma(t)}{4}
\leq
\frac{\Lambda_2L_\sigma(0)}{4},
\quad
(\theta,t)\in S^1\times[t_0,\min\{t_0+\Delta t,T\}).
\end{equation*}
By defining
\begin{equation*}
  2d=a,
\quad
2D=\frac{\Lambda_2L_{\sigma}(0)}{4},
\end{equation*}
we obtain the desired two-side bound for the support function.

\begin{lemma}\label{5}
Let $\gamma(t):S^1\times[0,T)\to\mathbb{R}^2$ be a smooth convex solution to the flow \eqref{1.7}.
For any fixed time $t_0\in[0,T)$, there exists a number $\Delta t>0$ which is independent of the time $t_0$, such that the support function $s(\theta,t)$ with respect to a suitably
chosen point $P_{t_0}$ inside $\gamma(t_0)$ satisfies
\begin{equation}\label{3.5}
2d\leq s(\theta,t)\leq 2D,
\qquad
(\theta,t)\in S^1\times[t_0,\min\{t_0+\Delta t,T\}),
\end{equation}
for some constants $d$ and $D$.
\end{lemma}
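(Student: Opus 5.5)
The plan is to assemble the bounds derived just before the statement into a clean argument. The proof has three ingredients: uniform control of the inradius from below and of the Euclidean length from above, a comparison with a shrinking circle to get the lower bound on $s$, and a diameter bound for the upper bound on $s$.

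\emph{Step 1: uniform geometric bounds.} Since $L_\sigma(t)=L_\sigma(0)$ by Lemma~\ref{2} and $\sigma\ge 2/\Lambda_2$, we get $L(t)\le \tfrac{\Lambda_2}{2}L_\sigma(0)$. Since $A(t)$ is nondecreasing by Lemma~\ref{2}, $A(t)\ge A(0)$. Inserting these into Bonnesen's inequality in the form \eqref{3.1}, and rationalizing as in \eqref{3.3}, gives
\[
r_{\mathrm{in}}(t)\ge \frac{2A(0)}{\Lambda_2L_\sigma(0)}=2a
\]
for every $t\in[0,T)$. The constant $a$ depends only on the initial curve.

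\emph{Step 2: lower bound.} Fix $t_0$ and let $P_{t_0}$ be the center of a maximal inscribed disk of $\gamma(t_0)$. The disk $E(t_0)$ of radius $2a$ centered at $P_{t_0}$ is then enclosed by $\gamma(t_0)$. Evolve $E$ by \eqref{3.4} with $\Lambda_3=\max m\varphi^\alpha$. The key claim is that $E(t)$ stays inside $\gamma(t)$, which follows from a comparison principle for the support function. In terms of the support function, the normal velocity of $\gamma$ is $V=m\varphi^\alpha\kappa^\alpha-m\lambda$. Because $m\lambda\ge0$, this satisfies $V\le m\varphi^\alpha\kappa^\alpha$, and the outward support speed of $\gamma$ is
\[
-V\ge -m\varphi^\alpha\kappa^\alpha .
\]
Suppose $\gamma(t)$ and $E(t)$ first touch at time $t_1$. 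At the contact point they share the normal $\theta$, and the curvature of $\gamma$ is at most that of $E$ there. Hence $m\varphi^\alpha\kappa_\gamma^\alpha\le \Lambda_3\kappa_E^\alpha$, so $\gamma$ moves inward no faster than $E$. Made rigorous by the standard perturbation $E_\epsilon$ (use a slightly smaller initial radius and let $\epsilon\to0$), this gives strict containment.

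Step 2 is the main obstacle. The nonlocal term $-m\lambda$ has a favourable sign, and one must use it to justify that the $\theta$-parametrized support functions satisfy $s_\gamma\ge s_E$ via the maximum principle applied to the difference of the parabolic equations $s_t=-V$. I would try the first-touching argument in normal coordinates first, since it avoids linearizing the fully nonlinear operator. With containment in hand, $s(\theta,t)\ge r(t)$. The explicit radius formula shows $r(t)\ge a$ on $[t_0,t_0+\Delta t)$ with $\Delta t=a^{\alpha+1}/((\alpha+1)\Lambda_3)$, which is independent of $t_0$, so $2d=a$ works.

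\emph{Step 3: upper bound.} Since $P_{t_0}$ lies inside $\gamma(t_0)\supset E(t)$, the point $P_{t_0}$ remains enclosed by $\gamma(t)$. The support function with respect to an interior point is bounded by the diameter, which is at most $L(t)/2$. Hence
\[
s\le L(t)/2\le \Lambda_2L_\sigma(0)/4=2D .
\]
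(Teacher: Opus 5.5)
Your proposal is correct and follows the paper's argument essentially step for step. It uses the Bonnesen bound $r_{\mathrm{in}}(t)\ge 2a$, which comes from the preserved $L_\sigma$ and nondecreasing $A$; it then compares $\gamma(t)$ with the circle evolving by \eqref{3.4} from a maximal inscribed center $P_{t_0}$ on $[t_0,t_0+\Delta t)$, and gets the upper bound from $s\le L(t)/2\le \Lambda_2L_\sigma(0)/4$.

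Your treatment of the comparison step is more detailed than the paper's one-line appeal to ``comparing the velocity''. One small point: $m\lambda>0$ holds strictly because $\kappa_\sigma>0$, not just $m\lambda\ge 0$. That strictness gives a strict velocity inequality at a first contact point, which is what makes your shrunken-radius perturbation argument work.
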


\begin{remark}
The constants $d$, $D$, and $\Delta t$ in Lemma~\ref{5} are independent of the initial time $t_0$.
Consequently, for every $\xi\in[0,T)$, one may choose a point $P_\xi$ inside the convex
domain enclosed by $\gamma(\xi)$ such that the support function $s_{\xi}(\theta,t)$ of $\gamma(t)$ with respect to $P_{\xi}$ satisfies
\begin{equation*}
    2d\leq s_{\xi}(\theta,t)\leq 2D,
    \quad
    (\theta,t)\in S^1\times[\xi,\min\{\xi+\Delta t,T\}).
\end{equation*}
Thus, the same two-side estimate is available on every translated time interval of length $\Delta t$.
\end{remark}


\subsection{Upper bound of the curvature}

In this subsection, we use the estimate for the support function to obtain a time-independent upper bound for the anisotropic curvature under the flow \eqref{1.7}.
This estimate will be used to prove the global existence of the solution and the convergence of the anisotropic curvature.

\begin{lemma}\label{6}
Let $\gamma(t):S^1\times[0,T)\rightarrow \mathbb{R}^2$ be a smooth, convex, closed
solution of the flow \eqref{1.7}.
There exists a constant $C_0>0$, independent of time, such that
\begin{equation}\label{3.6}
0<\omega(\theta,t)
=\kappa_\sigma^\alpha(\theta,t)
=\bigl(\varphi(\theta)\kappa(\theta,t)\bigr)^\alpha
\leq C_0,
\quad
(\theta,t)\in S^1\times[0,T).
\end{equation}
\end{lemma}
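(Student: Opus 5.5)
The plan is to run Tso's method on each time window supplied by Lemma~\ref{5}. Fix $t_0\in[0,T)$, let $P_{t_0}$ be the point from Lemma~\ref{5}, and let $s(\theta,t)$ be the support function with respect to $P_{t_0}$. Then $2d\le s\le 2D$ on $S^1\times[t_0,\min\{t_0+\Delta t,T\})$. Since $\theta$ is time independent and the normal speed is $V$, the support function satisfies $s_t=-V=-m(\omega-\lambda)$, and $s_{\theta\theta}+s=1/\kappa=\varphi\,\omega^{-1/\alpha}$.

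First I rewrite \eqref{2.5} in terms of $u:=m\omega$. Because $(m\omega)_{\theta\theta}+m\omega=m\omega_{\theta\theta}+2m_\theta\omega_\theta+(m+m_{\theta\theta})\omega$, this gives
\begin{equation*}
u_t=\frac{\alpha m}{\varphi}\,\omega^{1+\frac1\alpha}\bigl(u_{\theta\theta}+u-(m+m_{\theta\theta})\lambda\bigr).
\end{equation*}
I then consider the Tso quantity
\begin{equation*}
Q(\theta,t)=\frac{u}{s-d}=\frac{m\omega}{s-d},
\end{equation*}
which is well defined since $s-d\ge d>0$ on the window.

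At a spatial maximum of $Q$ we have $u_\theta=Qs_\theta$ and $u_{\theta\theta}\le Qs_{\theta\theta}$. Hence
\begin{equation*}
u_{\theta\theta}+u\le Q(s_{\theta\theta}+s)-Qd=Q\bigl(\varphi\omega^{-1/\alpha}-d\bigr).
\end{equation*}
The nonlocal term has a favourable sign in both places it appears: $\lambda>0$ and $m+m_{\theta\theta}>0$, so $-(m+m_{\theta\theta})\lambda$ in $u_t$ contributes with a nonpositive sign. The same holds for the term $-Q\,m\lambda/(s-d)$ coming from $-Q\,s_t/(s-d)$. Dropping both, I obtain at the maximum
\begin{equation*}
Q_t\le \frac{Q}{s-d}\Bigl((\alpha+1)m\omega-\frac{\alpha m d}{\varphi}\,\omega^{1+\frac1\alpha}\Bigr).
\end{equation*}
The two-sided bound on $s$ and the bounds on $m$ and $\varphi$ give $\omega\simeq Q$ up to constants. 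This yields the ODE inequality
\begin{equation*}
\frac{d}{dt}Q_{\max}\le C_1Q_{\max}^2-C_2Q_{\max}^{2+\frac1\alpha},
\end{equation*}
with $C_1,C_2$ depending only on $d,D,\alpha,m,\sigma$, not on $t_0$.

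Comparing with the ODE $y'=C_1y^2-C_2y^{2+1/\alpha}$, I obtain $Q_{\max}(t)\le C\bigl(1+(t-t_0)^{-\alpha}\bigr)$ on the window, independently of the initial value. In particular $Q$ is bounded by a uniform constant on $[t_0+\Delta t/2,\min\{t_0+\Delta t,T\})$. On the first window $[0,\Delta t)$ I instead use $Q_{\max}(t)\le\max\{Q_{\max}(0),C'\}$, where $C'$ is the root of $C_1y^2=C_2y^{2+1/\alpha}$, and $Q_{\max}(0)$ is controlled by $\omega_0$. The windows starting at $t_0=k\Delta t/2$ cover $[0,T)$, and on each of them $\omega\le (s-d)Q/m\le 2D\,Q/\min m$. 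This gives the time-independent bound $C_0$. Positivity of $\omega$ is Lemma~\ref{3}.

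The main obstacle I anticipate is the maximum-point computation. One must verify that the $\lambda$-terms really enter with a good sign and do not need an upper bound on $\lambda$, which is not available at this stage. One must also check that the coefficient of $\omega^{1+1/\alpha}$ from the $-Qd$ term dominates for large $Q$ with exponent strictly larger than $2$. Choosing $u=m\omega$ rather than $\omega$ is what makes the operator exactly $u_{\theta\theta}+u$, so that the curvature identity $s_{\theta\theta}+s=\varphi\omega^{-1/\alpha}$ can be used.
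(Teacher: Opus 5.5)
Your proposal is correct and follows essentially the same route as the paper. Both arguments use the Tso quantity $Q=m\omega/(s-d)$ on the windows of Lemma~\ref{5} and drop the two $\lambda$-terms using only $\lambda>0$ and $m+m_{\theta\theta}>0$. Both reach the maximum-point inequality $Q_t\le(\alpha+1)Q^2-c\,Q^{2+1/\alpha}$, then bound $Q$ by its initial data on the first window and by an initial-data-independent decay estimate on later windows.

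One small slip: the comparison ODE $y'=C_1y^2-C_2y^{2+1/\alpha}$ actually gives decay like $(t-t_0)^{-\alpha/(\alpha+1)}$, not $(t-t_0)^{-\alpha}$. This still implies your stated bound $C\bigl(1+(t-t_0)^{-\alpha}\bigr)$, so nothing breaks.
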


\begin{proof}
\textit{Case 1: $0<T\leq\Delta t$.}

 Consider the quantity
\begin{equation*}
  Q(\theta,t)=\frac{m\omega(\theta,t)}{s(\theta,t)-d},
\quad
(\theta,t)\in S^1\times[0,T).
\end{equation*}
Under the flow,  the evolution of the support function $s(\theta,t)$ is
\begin{equation*}
\frac{\partial s}{\partial t}=-m\bigl(\omega-\lambda(t)\bigr),
\quad
(\theta,t)\in S^1\times[0,T),
\end{equation*}
where $\lambda(t)$ is given by \eqref{2.3}.
Direct computation shows that the evolution equation of $Q$ on $S^1\times[0,T)$ is
\begin{equation*}
  \frac{\partial Q}{\partial t}
=\frac{\alpha m\omega^{1+\frac{1}{\alpha}}}{\varphi(s-d)}\left((m\omega)_{\theta\theta}
+m\omega-(m+m_{\theta\theta})\lambda(t)\right)
+\frac{m^2\omega(\omega-\lambda(t))}{(s-d)^2}.
\end{equation*}

Note that
\begin{equation*}
  Q_{\theta}=\frac{(m\omega)_{\theta}}{s-d}-\frac{m\omega s_{\theta}}{(s-d)^2},
\end{equation*}
and
\begin{equation*}
  Q_{\theta\theta}
=\frac{(m\omega)_{\theta\theta}}{s-d}-\frac{2(m\omega)_{\theta} s_{\theta}}{(s-d)^2}
-\frac{m\omega s_{\theta\theta}}{(s-d)^2}+\frac{2m\omega s_{\theta}^2}{(s-d)^3}.
\end{equation*}
Therefore  we have
\begin{align*}
Q_t
&=\frac{\alpha m\omega^{1+\frac{1}{\alpha}}}{\varphi}Q_{\theta\theta}
+\frac{2\alpha m\omega^{1+\frac{1}{\alpha}}s_\theta}{\varphi(s-d)}Q_\theta
+\frac{\alpha m^2\omega^{2+\frac{1}{\alpha}}s_{\theta\theta}}{\varphi(s-d)^2}
-\frac{\alpha m(m+m_{\theta\theta})\omega^{1+\frac{1}{\alpha}}}{\varphi(s-d)} \lambda(t)                                                      \\
&\quad
+\frac{\alpha m^2\omega^{2+\frac{1}{\alpha}}}{\varphi(s-d)}
+\frac{m^2\omega(\omega-\lambda(t))}{(s-d)^2}
\\
\quad
&\leq
\frac{\alpha m\omega^{1+\frac{1}{\alpha}}}{\varphi}Q_{\theta\theta}
+\frac{2\alpha m\omega^{1+\frac{1}{\alpha}}s_{\theta}}{\varphi(s-d)}Q_{\theta}
+\frac{(\alpha+1)m^2\omega^2}{(s-d)^2}
-\frac{\alpha dm^2\omega^{2+\frac{1}{\alpha}}}{\varphi(s-d)^2}
\\
\quad
&\leq
\frac{\alpha m\omega^{1+\frac{1}{\alpha}}}{\varphi}Q_{\theta\theta}
+\frac{2\alpha m\omega^{1+\frac{1}{\alpha}}s_{\theta}}{\varphi(s-d)}Q_{\theta}
+Q^2\left(\alpha+1-\frac{\alpha d^{1+\frac{1}{\alpha}}}
{\varphi m^{\frac{1}{\alpha}}}Q^{\frac{1}{\alpha}}\right).
\end{align*}
Fix $t\in(0,T)$ and suppose that  $\max_{S^1\times[0,t]}Q=Q(\theta_0,t_0)$ for some $(\theta_0,t_0)\in S^1\times[0,t]$.
If $t_0>0$,  then at the point $(\theta_0,t_0)$,
\begin{equation*}
  Q_t(\theta_0,t_0)\geq0,
\quad
Q_\theta(\theta_0,t_0)=0,
\quad
Q_{\theta\theta}(\theta_0,t_0)\leq0.
\end{equation*}
This implies that
\begin{equation*}
  Q(\theta_0,t_0)
\leq
\left(1+\frac{1}{\alpha}\right)^{\alpha}
\frac{m(\theta_0)\varphi^{\alpha}(\theta_0)}{d^{\alpha+1}}
\leq
\left(1+\frac1\alpha\right)^{\alpha}\frac{\Lambda_3}{d^{\alpha+1}}.
\end{equation*}
So we have
\begin{equation*}
  \max_{S^1\times[0,t]}Q
\leq
\max\left\{\max_{S^1}Q(\theta,0),\left(1+\frac{1}{\alpha}\right)
^{\alpha}\frac{\Lambda_3}{d^{\alpha+1}}\right\}
:=M,
\end{equation*}
and
\begin{equation*}
  \max_{S^1\times[0,t]}\omega\leq\frac{M(2D-d)}{\min_{S^1}m}:=M_1.
\end{equation*}
Since $t<T$ is arbitrary, we have
\begin{equation*}
  \sup_{S^1\times[0,T)}\omega\leq M_1.
\end{equation*}

\textit{Case 2: $T>\Delta t$.}

Let $\xi\in(0,T-\Delta t)$. Consider the evolution problem on the time interval $[\xi,\xi+\Delta t]$.
As noted in Remark~3.1, we can choose a suitable point $P$ as the origin so that the support function with respect to $P$ of $\gamma(\cdot,t)$ still satisfies
\begin{equation*}
  2d\leq s_{\xi}(\theta,t)\leq 2D,
    \quad
    (\theta,t)\in S^1\times[\xi,\xi+\Delta t].
\end{equation*}
As in Case 1, we consider the evolution of the quantity
$Q(\theta,t):=\frac{m\omega}{s_{\xi}-d}$.
Let
\begin{equation*}
  Q^*=\left(\frac{2(\alpha+1)}{\alpha}\right)^{\alpha}\frac{\Lambda_3}{d^{\alpha+1}}.
\end{equation*}
Then whenever
\begin{equation*}
  Q_{\max}=Q(\theta,t)\geq Q^*,
  \quad
  t \in[\xi,\xi+\Delta t],
\end{equation*}
we have
\begin{equation*}
  \frac{d}{dt}Q_{\max}(t)\leq -(\alpha+1)Q_{\max}^2(t),
  \quad
  t \in(\xi,\xi+\Delta t].
\end{equation*}
The comparison principle therefore yields
\begin{equation*}
  Q_{\max} \leq \max\left\{ Q^*,\frac{1}{(\alpha+1)(t-\xi)}\right\},
  \quad
 t\in(\xi,\xi+\Delta t].
\end{equation*}
In particular, we get
\begin{equation*}
\frac{m\omega}{s_{\xi}-d}\leq \max\left\{Q^*,\frac{1}{(\alpha+1)(t-\xi)}\right\},
\quad
(\theta,t) \in S^1\times(\xi,\xi+\Delta t],
\end{equation*}
and then by the two-side bound on the support function we conclude that
\begin{equation*}
  \omega\leq
  \frac{2D-d}{\min_{S^1}m} \max\left\{Q^*,\frac{1}{(\alpha+1)(t-\xi)}\right\},
  \quad
  t\in(\xi,\xi+\Delta t].
\end{equation*}
In particular, when we take $t=\xi+\Delta t$, we have
\begin{equation*}
  \omega_{\max}(\xi+\Delta t)
  \leq
  \frac{2D-d}{\min_{S^1}m}\max\left\{Q^*,\frac{1}{(\alpha+1)\Delta t}\right\}:=M_2.
\end{equation*}
Since $\xi$ can be chosen arbitrarily in $(0,T-\Delta t)$,
the above bound always holds for $\omega_{\max}(t)$ for $t\in(\Delta t,T)$.
Therefore, by taking
\begin{equation*}
  C_0:=\max\{M_1,M_2\},
\end{equation*}
we have
\begin{equation*}
  \omega(\theta,t)\leq C_0,
\quad
(\theta,t)\in S^1\times[0,T).
\end{equation*}
This completes the proof.
\end{proof}


\subsection{Lower bound and higher-order estimates on curvature}

We next derive estimates for the derivatives of $\omega$.
For convenience, set
\begin{equation*}
  \omega_n=\frac{\partial^n\omega}{\partial\theta^n},
\quad
n=1,2,3,\ldots.
\end{equation*}

\begin{lemma}\label{7}
Assume that $\gamma(t)$ is a smooth convex solution of the flow
\eqref{1.7} on $[0,T)$. Then there exists a constant
$C_1>0$ such that
\begin{equation*}
  |\omega_\theta(\theta,t)|\leq C_1,
\quad
(\theta,t)\in S^1\times[0,T).
\end{equation*}
\end{lemma}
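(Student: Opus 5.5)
We would prove the gradient bound with a Bernstein-type maximum principle argument applied directly to the equation \eqref{2.5}, using the uniform upper bound $0<\omega\le C_0$ from Lemma~\ref{6} and the consequent bound $0<\lambda(t)\le C_0$, which follows from \eqref{2.3} since $\lambda$ is a weighted average of $\omega$.

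\textbf{Step 1: an integral identity.} Before any maximum principle argument, we would first record that $\omega$ cannot be large in an averaged sense. The identity $\int_0^{2\pi}\kappa^{-1}\cos\theta\,d\theta=\int_0^{2\pi}\kappa^{-1}\sin\theta\,d\theta=0$ expresses closedness of the curve. Together with the preserved quantity $L_\sigma=\int_0^{2\pi}\sigma\varphi\,\omega^{-1/\alpha}\,d\theta$, it shows that $\omega$ cannot be small on a large set. This step is mostly a safeguard; the core of the proof is Step~2.

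\textbf{Step 2: the Bernstein quantity.} Differentiating \eqref{2.5} in $\theta$ gives a linear parabolic equation for $\omega_\theta$ of the form
\begin{equation*}
(\omega_\theta)_t=a\,\omega_{\theta\theta\theta}+\tilde b\,\omega_{\theta\theta}+\tilde c\,\omega_\theta+e,
\end{equation*}
with $a=\frac{\alpha m}{\varphi}\omega^{1+\frac1\alpha}$. The coefficients $\tilde b,\tilde c,e$ involve $\omega$, $\omega_\theta$, $\lambda$ and the smooth functions $m,\varphi$. The term $\tilde b\,\omega_{\theta\theta}$ contains $a_\theta\omega_{\theta\theta}$, where
\begin{equation*}
a_\theta=\frac{\alpha m}{\varphi}\Bigl(1+\frac1\alpha\Bigr)\omega^{1/\alpha}\omega_\theta+\text{(bounded)}\cdot\omega^{1+1/\alpha}.
\end{equation*}
Rather than work with $\omega_\theta$ alone, we would consider
\begin{equation*}
\Phi=\tfrac12\omega_\theta^2+\mu\,\omega^2
\quad\text{or}\quad
\Phi=\tfrac12\omega_\theta^2+\mu\,\omega,
\end{equation*}
with $\mu$ a large constant to be chosen. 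At an interior spatial maximum of $\Phi$ we have $\omega_\theta\omega_{\theta\theta}=-2\mu\omega\omega_\theta$ (respectively $-\mu\omega_\theta$). This lets us replace every $\omega_{\theta\theta}$ multiplied by $\omega_\theta$ by lower-order terms. The dangerous contribution is $a_\theta\omega_{\theta\theta}\omega_\theta\sim \omega^{1/\alpha}\omega_\theta^2\omega_{\theta\theta}$, which becomes $\sim -\mu\,\omega^{1+1/\alpha}\omega_\theta^2$ and has a favourable sign. The term $a\,\omega_{\theta\theta\theta}\omega_\theta$ is handled through $a\Phi_{\theta\theta}\le0$ at the maximum. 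The resulting $-a\,\omega_{\theta\theta}^2$ term and the $\mu$-term coming from $\omega_t$ then combine, via \eqref{2.5}, into an expression of the form $-c\,\omega^{1+1/\alpha}\omega_\theta^2+C\,\omega^{1+1/\alpha}(1+|\omega_\theta|)$. This yields $\omega_\theta^2\le C$ at a new maximum whenever $\omega$ is bounded below there.

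\textbf{Main obstacle.} The difficulty is degeneracy. All good terms carry the factor $\omega^{1+1/\alpha}$, and Lemma~\ref{6} gives only an upper bound on $\omega$, not a uniform positive lower bound; Lemma~\ref{3} gives only a time-dependent one. The remedy we would try first is to check that the bad terms carry the same power $\omega^{1/\alpha}$ or higher. Inspection of $\tilde c$ and $e$ shows every term is proportional to $\omega^{1/\alpha}$ times a polynomial in $\omega,\omega_\theta,\lambda$. We could therefore divide by $\omega^{1/\alpha}$ at the maximum point, which makes the estimate uniform regardless of how small $\omega$ is.

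If that division fails for some term, we would instead combine the bound with the Harnack-type input from Step~1 and the two-sided support function bound of Lemma~\ref{5}. This would give $\omega$ bounded below on $[\xi,\xi+\Delta t]$, in the spirit of Tso's method applied to $-\omega$, before concluding.

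The maximum principle then yields
\begin{equation*}
\max\Phi(\cdot,t)\le\max\Bigl\{\max\Phi(\cdot,0),\,C(C_0,\alpha,m,\varphi)\Bigr\},
\end{equation*}
and hence $|\omega_\theta|\le C_1$ independent of time.
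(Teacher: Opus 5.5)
Your proposal has a genuine gap in Step 2. With either auxiliary function the maximum-principle computation does not close, because your bookkeeping misses a cubic term.

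The first-order coefficient $\frac{2\alpha m_\theta}{\varphi}\omega^{1+\frac{1}{\alpha}}$ in \eqref{2.5} depends on $\omega$. Differentiating in $\theta$ therefore puts $\frac{2(\alpha+1)m_\theta}{\varphi}\omega^{\frac{1}{\alpha}}\omega_\theta^2$ into the equation for $\omega_\theta$ (inside your $\tilde c\,\omega_\theta$). Consequently the equation for $\frac12\omega_\theta^2$ contains $\frac{2(\alpha+1)m_\theta}{\varphi}\omega^{\frac{1}{\alpha}}\omega_\theta^3$, which has no sign unless $m_\theta\equiv0$.

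At a spatial maximum of $\Phi=\frac12\omega_\theta^2+\mu\omega$ (resp.\ $+\mu\omega^2$) you only learn $\omega_{\theta\theta}=-\mu$ (resp.\ $-2\mu\omega$), which is a \emph{bounded} quantity. So the good term produced by $a_\theta\omega_{\theta\theta}\omega_\theta$ is only $-\mu\frac{(\alpha+1)m}{\varphi}\omega^{\frac{1}{\alpha}}\omega_\theta^2$ (resp.\ a multiple of $-\mu\,\omega^{1+\frac{1}{\alpha}}\omega_\theta^2$), and $-a\,\omega_{\theta\theta}^2$ is $O(1)$. Quadratic good terms cannot control a cubic term of indefinite sign when $|\omega_\theta|$ is large. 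Hence the final form $-c\,\omega^{1+\frac{1}{\alpha}}\omega_\theta^2+C\omega^{1+\frac{1}{\alpha}}(1+|\omega_\theta|)$ you assert is not what the computation gives. Your argument would work if $m_\theta\equiv0$, e.g.\ $m\equiv1$; it is precisely the weight $m$ that breaks it.

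The $\mu\omega^2$ version has an additional weight mismatch. Its good terms carry $\omega^{1+\frac{1}{\alpha}}$ while the cubic term carries only $\omega^{\frac{1}{\alpha}}$, so dividing by $\omega^{\frac{1}{\alpha}}$ does not remove the degeneracy.

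Your fallback does not rescue the argument, because no time-independent positive lower bound on $\omega$ is available at this stage. For $\alpha>1$ none is proved before the convergence argument of Section~4. For $0<\alpha\le1$, Lemma~\ref{8} is itself derived from the gradient bound you are trying to prove.

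The missing idea is to choose the correction so that the critical-point condition makes $\omega_{\theta\theta}$ proportional to $\omega_\theta$. The paper uses the linear quantity $F=\omega_\theta+\beta\omega$.
\begin{itemize}
\item At a maximum of $F$ one has $\omega_{\theta\theta}=-\beta\omega_\theta$, so $a_\theta\omega_{\theta\theta}$ contributes $-\beta\frac{(\alpha+1)m}{\varphi}\omega^{\frac{1}{\alpha}}\omega_\theta^2$.
\item This is of the same order as the bad term $\frac{2(\alpha+1)m_\theta}{\varphi}\omega^{\frac{1}{\alpha}}\omega_\theta^2$ and dominates it once $\beta$ is large. The terms $\beta a\,\omega_{\theta\theta}$ cancel exactly, so no $\beta^2$ term appears.
\item Every remaining term is $\omega^{\frac{1}{\alpha}}$ times something bounded by $C(1+|\omega_\theta|)$, using $\omega\le C_0$ and $\lambda\le C_0$.
\item Hence $F_t\le aF_{\theta\theta}+\tilde bF_\theta+\omega^{\frac{1}{\alpha}}\bigl(-A\omega_\theta^2+B|\omega_\theta|+C\bigr)$.
\item Since $\omega>0$ pointwise by Lemma~\ref{3}, the factor $\omega^{\frac{1}{\alpha}}$ can be dropped at the maximum point. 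This is the correct realization of your ``divide by $\omega^{1/\alpha}$'' remedy, and it needs no lower bound on $\omega$.
\end{itemize}
The lower bound for $\omega_\theta$ follows in the same way from $-\omega_\theta+\beta\omega$.
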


\begin{proof}
Let $F=\omega_1+\beta\omega$ with $\beta>0$ to be chosen later. Then
\begin{equation*}
F_\theta=\omega_2+\beta\omega_1,
\quad
F_{\theta\theta}=\omega_3+\beta\omega_2.
\end{equation*}
The evolution of $\omega_1$ is given by
\begin{align*}
(\omega_1)_t
&=\frac{\alpha m\omega^{1+\frac1\alpha}}{\varphi}\omega_3
+\left[\alpha m\left(\frac{\omega^{1+\frac{1}{\alpha}}}{\varphi}\right)_{\theta}
+\frac{3\alpha m_{\theta}\omega^{1+\frac{1}{\alpha}}}{\varphi}\right]\omega_2                                      \\
&\quad
+\left[\frac{\alpha\omega^{1+\frac{1}{\alpha}}}{\varphi}(m+3m_{\theta\theta})
+2\alpha m_{\theta}\left(\frac{\omega^{1+\frac{1}{\alpha}}}{\varphi}\right)_{\theta}
+\frac{(\alpha+1)(m+m_{\theta\theta})}{\varphi}\omega^{\frac{1}{\alpha}}
(\omega-\lambda(t))\right]\omega_1
\\
&\quad
+\alpha\left(\frac{m+m_{\theta\theta}}{\varphi}\right)_{\theta}\omega^{1+\frac{1}{\alpha}}
(\omega-\lambda(t)).
\end{align*}
A direct computation shows that
\begin{equation*}
  F_t
=\frac{\alpha m\omega^{1+\frac{1}{\alpha}}}{\varphi}F_{\theta\theta}
+\left(\alpha m\left(\frac{\omega^{1+\frac{1}{\alpha}}}{\varphi}\right)_{\theta}
+\frac{3\alpha m_{\theta}\omega^{1+\frac{1}{\alpha}}}{\varphi}\right)F_{\theta}
+G(\theta,t),
\end{equation*}
where
\begin{equation*}
\begin{aligned}
G(\theta,t)
=&
\omega^{\frac{1}{\alpha}}\Bigg[\frac{(\alpha+1)(2m_{\theta-}\beta m)}{\varphi}\omega_1^2
\\
&\quad
+\Bigg(\left(\frac{2\alpha m_{\theta\theta}-\alpha\beta m_{\theta}}{\varphi}
+\frac{\alpha\beta m\varphi_{\theta}-2\alpha m_{\theta}\varphi_{\theta}}{\varphi^2}
+\frac{\alpha(m_{\theta\theta}+m)}{\varphi}\right)\omega
\\
&\qquad\quad
+\frac{(\alpha+1)(m_{\theta\theta}+m)}{\varphi}\bigl(\omega-\lambda(t)\bigr)\Bigg)\omega_1
\\
&\quad
+\alpha\Bigg(\frac{m_{\theta\theta\theta}+m_\theta+\beta(m_{\theta\theta}+m)}{\varphi}
-\frac{(m_{\theta\theta}+m)\varphi_{\theta}}{\varphi^2}\Bigg)
\omega\bigl(\omega-\lambda(t)\bigr)\Bigg].
\end{aligned}
\end{equation*}
Note that $m$, $\varphi$ are smooth positive functions on $S^1$.
We may choose $\beta>0$ sufficiently large such that $\frac{(\alpha+1)(2m_\theta-\beta m)}{\varphi}\leq -A$ for some constant $A>0$ independent of time.
Fix $t_1\in(0,T)$, and suppose that $F(\theta^*,t^*)=\max_{S^1\times[0,t_1]}F(\theta,t)$
for some $(\theta^*,t^*)\in S^1\times[0,t_1]$. We may assume that $t^*>0$.
By \eqref{3.6}, both $\omega$ and $\lambda(t)$ are uniformly bounded in time.
Hence, there exist positive constants $B$ and $C$, independent of time, such that
\begin{equation*}
G(\theta,t)\leq\left(-A\omega_1^2+B|\omega_1|+C\right)\omega^{\frac1\alpha}.
\end{equation*}
At $(\theta^*,t^*)$, we have
\begin{equation*}
F_t\geq0,\quad
F_\theta=0,\quad
F_{\theta\theta}\leq0.
\end{equation*}
Since $\omega$ is uniformly bounded, sufficiently large $F(\theta^*,t^*)$ implies that $\omega_1(\theta^*,t^*)$ is sufficiently large and positive.
Thus, $G(\theta^*,t^*)<0$, and the evolution equation of $F$ gives $F_t(\theta^*,t^*)<0$, a contradiction.
By the maximum principle, $F$ has a time-independent upper bound. Since $\omega$ is uniformly bounded, $\omega_1$ also has a time-independent upper bound.

To derive the time-independent lower bound for $\omega_1$, we set $\widetilde{F}=-\omega_1+\beta\omega$ for a suitable constant $\beta>0$ and apply the same argument as above.
\end{proof}

\begin{lemma}\label{8}
Under the flow \eqref{1.7} with $0<\alpha\leq1$, there exists a constant $c_0>0$ independent of time such that
\begin{equation*}
  \kappa(\theta,t)\geq c_0>0,
  \quad
  (\theta,t)\in S^1\times[0,T).
\end{equation*}
\end{lemma}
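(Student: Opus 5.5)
The plan is to combine the time-independent gradient bound of Lemma~\ref{7} with a time-independent upper bound on the Euclidean length $L(t)$. If the minimum of $\omega$ were very small, the Lipschitz bound would force $\omega$ to stay small on a whole neighbourhood. That in turn would make $\int_0^{2\pi}\kappa^{-1}\,d\theta=L$ blow up. The restriction $0<\alpha\le1$ is exactly what makes the relevant integral diverge.

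\emph{Step 1 (length bound).} By Lemma~\ref{2} and \eqref{3.2},
\begin{equation*}
L(t)=\int_0^{2\pi}\frac{d\theta}{\kappa}=\int_0^{2\pi}\varphi\,\omega^{-\frac1\alpha}\,d\theta\le \frac{\Lambda_2}{2}L_\sigma(0)=:C_L
\end{equation*}
for all $t\in[0,T)$. Hence $\int_0^{2\pi}\omega^{-1/\alpha}d\theta\le C_L/\Lambda_0$, uniformly in time.

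\emph{Step 2 (Lipschitz comparison).} Fix $t$ and let $\varepsilon=\omega(\theta_0,t)=\min_{S^1}\omega(\cdot,t)$, which is positive by Lemma~\ref{3}. By Lemma~\ref{7}, for $|\theta-\theta_0|\le\pi$ we have
\begin{equation*}
\omega(\theta,t)\le \varepsilon+C_1|\theta-\theta_0|.
\end{equation*}
Therefore
\begin{equation*}
\frac{C_L}{\Lambda_0}\ge\int_{-\pi}^{\pi}\bigl(\varepsilon+C_1|\tau|\bigr)^{-\frac1\alpha}d\tau=\frac{2}{C_1}\int_{\varepsilon}^{\varepsilon+C_1\pi}x^{-\frac1\alpha}dx .
\end{equation*}
For $\alpha=1$ the right side equals $\frac{2}{C_1}\log\bigl(1+\frac{C_1\pi}{\varepsilon}\bigr)$. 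For $0<\alpha<1$, with $p=1/\alpha>1$, it equals $\frac{2}{C_1(p-1)}\bigl(\varepsilon^{1-p}-(\varepsilon+C_1\pi)^{1-p}\bigr)$. In both cases the right side tends to $+\infty$ as $\varepsilon\to0^+$ and is decreasing in $\varepsilon$. Hence $\varepsilon\ge\varepsilon_0>0$, where $\varepsilon_0$ depends only on $C_1,C_L,\Lambda_0,\alpha$.

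\emph{Step 3 (conclusion).} Since $\kappa=\omega^{1/\alpha}/\varphi\ge\varepsilon_0^{1/\alpha}/\Lambda_1=:c_0$, we obtain the claimed time-independent lower bound.

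The only delicate point is making sure the constants are genuinely time-independent. $C_1$ comes from Lemma~\ref{7}, which itself relies on the uniform bound $C_0$ of Lemma~\ref{6}. $C_L$ comes from the preservation of $L_\sigma$ in Lemma~\ref{2}. The argument fails for $\alpha>1$, because $x^{-1/\alpha}$ is then integrable at $0$; this is why the lemma is restricted to $0<\alpha\le1$.
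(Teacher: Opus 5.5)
Your proof is correct, and it reaches the bound by a different mechanism from the paper's.

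\textbf{The paper's route.} It first uses Lemmas~\ref{6} and~\ref{7} to get an arc-length gradient bound $|(\kappa^{\alpha-1})_s|\le c_2$. Lemma~\ref{6} is needed there to absorb the $\varphi_\theta$ term when passing from $\omega_\theta$ to $(\kappa^{\alpha})_\theta$. It then integrates this bound along the curve, so the oscillation of $\kappa^{\alpha-1}$ is at most $c_2L(t)\le c_3$. Finally it anchors this at the point of maximal curvature, where $\kappa_{\max}\ge 2\pi\min_{S^1}\sigma/L_\sigma(0)$ by the preservation of $L_\sigma$. The case $\alpha=1$ is handled separately with $\ln\kappa$.

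\textbf{Your route.} You integrate the $\theta$-Lipschitz bound on $\omega$ directly. You then show that a small minimum would force $L(t)=\int_{S^1}\varphi\,\omega^{-1/\alpha}\,d\theta$ above its uniform bound.

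\textbf{Comparison.} Both arguments use the same inputs: uniform bounds on $|\omega_\theta|$ and on $L$. They are close relatives, since the paper's oscillation estimate amounts to $\int_{S^1}|(\kappa^{\alpha-1})_\theta|\,d\theta\le c_2\int_{S^1}\kappa^{-1}\,d\theta$. Your version has three advantages: it treats $0<\alpha<1$ and $\alpha=1$ in a single computation, it needs no separate lower bound for $\kappa_{\max}$, and it makes clear that the restriction $\alpha\le1$ is exactly the non-integrability of $x^{-1/\alpha}$ at $0$. The paper's version gives a closed-form constant $c_0=c_4^{-1/(1-\alpha)}$. Your $\varepsilon_0$ is instead defined implicitly through a strictly decreasing function, which is equally adequate here.
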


\begin{proof}
We first consider the case $0<\alpha<1$.
Since $\omega=\varphi^{\alpha}\kappa^{\alpha}$, the estimates in Lemmas~\ref{6} and~\ref{7} imply $\left| \kappa^{-1}(\kappa^{\alpha})_s \right| \leq c_1$ for some constant $c_1>0$ independent of time, where $s$ is the arc-length parameter of $\gamma(\cdot,t)$.
Therefore, $\left|(\kappa^{\alpha-1})_s\right|\leq c_2$ for some constant $c_2>0$ independent of time.
For any $s_1\leq s_2\in[0,L(t))$,  we have
\begin{equation}\label{3.7}
  \left|\kappa^{\alpha-1}(s_2,t)-\kappa^{\alpha-1}(s_1,t)\right|
  \leq
  \int_{s_1}^{s_2}\left|(\kappa^{\alpha-1})_s\right|ds\leq c_2L(t)
  \leq c_3,
\end{equation}
where the last inequality follows from $L_{\sigma}(t)=L_{\sigma}(0)$,
$L(t)\leq \frac{L_\sigma(t)}{\min_{S^1}\sigma}$ and $c_3$ is a constant independent of time.
Notice that for any $t\in[0,T)$, it holds that
\begin{equation*}
  L_{\sigma}(0)\geq L_{\sigma}(t)
  =\int_0^{2\pi}\frac{\sigma}{\kappa(\theta,t)}\,d\theta
  \geq
  \frac{2\pi\min_{S^1}\sigma}{\kappa(\theta_1(t),t)},
\end{equation*}
where
$\kappa(\theta_1(t),t)=\max_{S^1}\kappa(\theta,t)$.
Assume that
$\kappa(\theta_2(t),t)=\min_{S^1}\kappa(\theta,t)$.
By applying \eqref{3.7} at the points corresponding to $\theta_1(t)$ and $\theta_2(t)$, we obtain
\begin{equation*}
  \kappa^{\alpha-1}(\theta_2(t),t)
  \leq
  c_3+\left(\frac{2\pi\min_{S^1}\sigma}{L_{\sigma}(0)}\right)^{\alpha-1}
  :=c_4,
\end{equation*}
i.e.,
\begin{equation*}
  \kappa(\theta_2(t),t)
  \geq
  c_4^{-\frac{1}{1-\alpha}}>0.
\end{equation*}
Thus, the lower bound for the curvature has been obtained for the case $0<\alpha<1$.
For the case $\alpha=1$, we can use $\ln\kappa$ instead of $\kappa^{\alpha-1}$ to obtain the desired result.
\end{proof}


\begin{lemma}\label{9}
For the flow \eqref{1.7}, if we assume $\omega$ and $\omega_\theta$ satisfy
\begin{equation*}
  0<c_0\leq \omega\leq C_0,
  \quad
  |\omega_{\theta}|\leq C_1,
  \quad
  (\theta,t)\in S^1\times[0,T),
\end{equation*}
then there exists a positive time-independent constant $C_2$ such that
\begin{equation*}
  |\omega_{\theta\theta}(\theta,t)|\leq C_2,
  \quad
  (\theta,t)\in S^1\times[0,T).
\end{equation*}
\end{lemma}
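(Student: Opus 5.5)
We will follow the Bernstein-type argument of Lemma~\ref{7}, now one derivative higher. The natural test function is
\begin{equation*}
  H=\omega_2+\beta\,\omega_1^2\qquad\text{(or, alternatively, } H=\omega_2^2+\beta\omega_1^2\text{)},
\end{equation*}
with $\beta>0$ a large constant chosen later. It is convenient to write the equation \eqref{2.5} as $\omega_t=a\,\omega_2+b\,\omega_1+c$, where
\begin{equation*}
  a=\frac{\alpha m}{\varphi}\omega^{1+\frac1\alpha},\qquad
  b=\frac{2\alpha m_\theta}{\varphi}\omega^{1+\frac1\alpha},\qquad
  c=\frac{\alpha(m+m_{\theta\theta})}{\varphi}\omega^{\frac1\alpha}(\omega-\lambda(t)).
\end{equation*}
By the hypotheses and Lemma~\ref{6}, the coefficients $a,b,c$ and their $\theta$-derivatives $a_\theta,b_\theta,c_\theta$ (which involve only $\omega$ and $\omega_1$) are bounded independently of time. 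Moreover $a\geq a_0>0$, because $\omega\geq c_0$; this uniform ellipticity is exactly where the lower bound is used. The second $\theta$-derivative $a_{\theta\theta}$ contains a term $\frac{\alpha m}{\varphi}(1+\frac1\alpha)\omega^{\frac1\alpha}\omega_2$, plus bounded terms. The same linear-in-$\omega_2$ structure holds for $b_{\theta\theta}$ and $c_{\theta\theta}$.

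\textbf{Step 1: the evolution of $\omega_2$.} Differentiating twice gives
\begin{equation*}
  (\omega_2)_t=a\,\omega_4+(2a_\theta+b)\,\omega_3+(a_{\theta\theta}+2b_\theta)\,\omega_2+b_{\theta\theta}\,\omega_1+c_{\theta\theta}.
\end{equation*}
Here $a_\theta$ is bounded, and $a_{\theta\theta}\omega_2$ is at most $C\omega_2^2+C|\omega_2|$. Hence
\begin{equation*}
  (\omega_2)_t\le a\,\omega_4+(2a_\theta+b)\,\omega_3+C_\ast\,\omega_2^2+C(|\omega_2|+1),
\end{equation*}
with $C_\ast=(\alpha+1)\frac{m}{\varphi}\omega^{\frac1\alpha}$ bounded.

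\textbf{Step 2: the evolution of $\omega_1^2$.} We compute
\begin{equation*}
  (\omega_1^2)_t=a(\omega_1^2)_{\theta\theta}-2a\,\omega_2^2+(\text{lower order}),
\end{equation*}
where the lower-order part is bounded by $C(|\omega_2|+1)$.

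\textbf{Step 3: maximum principle for $H$.} At an interior spatial maximum of $H$ we have $H_\theta=0$, which gives $\omega_3=-2\beta\omega_1\omega_2$. The gradient term $(2a_\theta+b)\omega_3$ therefore becomes $O(\beta|\omega_2|)$ and is absorbed. This yields
\begin{equation*}
  H_t\le aH_{\theta\theta}+\bigl(C_\ast-2\beta a_0\bigr)\omega_2^2+C\beta(|\omega_2|+1).
\end{equation*}
Choosing $\beta>C_\ast/a_0$ makes the coefficient of $\omega_2^2$ at most $-\beta a_0$. Consequently, wherever $H$ (equivalently $\omega_2$) is large, the right-hand side is negative. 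This gives a time-independent upper bound for $H$, and hence for $\omega_2$.

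\textbf{Step 4: the lower bound.} For the lower bound on $\omega_2$, we apply the same argument to $\widetilde H=-\omega_2+\beta\omega_1^2$. In this case the quadratic term $-C_\ast\omega_2^2$ from $a_{\theta\theta}$ enters with a favourable sign, so the $-2\beta a\omega_2^2$ term again dominates. The initial value $H(\cdot,0)$ is bounded since $\omega_0$ is smooth.

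\textbf{Main obstacle.} The main difficulty is controlling the $\omega_2^2$ term produced by the nonlinearity $a(\omega)\omega_2$. This requires the uniform ellipticity $a\ge a_0$, which is precisely the role of the assumption $\omega\ge c_0$. The other delicate point is bookkeeping the $\omega_3$ term through the critical-point condition $H_\theta=0$. If the linear test function $H=\omega_2+\beta\omega_1^2$ causes sign issues, we will instead use $\omega_2^2+\beta\omega_1^2$. The good term is then $-2a\omega_3^2$, and the cross term $\omega_2\omega_3$ is handled via $H_\theta=0$.
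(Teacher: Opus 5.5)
Your argument is correct and is essentially the paper's proof. The paper uses the same test function $F=\omega_2+\beta\omega_1^2$ and absorbs the $\omega_3$ term into the drift $B_2F_\theta$. It then chooses $\beta\ge\frac{\alpha+2}{2\alpha c_0}$, so that $-2\beta A+\partial_\omega A=\frac{m}{\varphi}\omega^{1/\alpha}(\alpha+1-2\alpha\beta\omega)$ is uniformly negative, and treats $-\omega_2+\beta\omega_1^2$ the same way.

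Two small remarks. Your $c$ should carry $\omega^{1+\frac1\alpha}$ rather than $\omega^{\frac1\alpha}$; this is harmless. Your fallback $\omega_2^2+\beta\omega_1^2$ would in fact not close: it produces the cubic term $2C_\ast\omega_2^3$, which the available quadratic good terms cannot absorb when $\omega_2>0$. It is not needed, though.
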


\begin{proof}
For convenience, we rewrite \eqref{2.5} as
\begin{equation*}
  \frac{\partial\omega}{\partial t}
=A(\omega)\omega_2+B(\omega)\omega_1+C(\omega),
\end{equation*}
where  $A(\omega)=\frac{\alpha m\omega^{1+\frac{1}{\alpha}}}{\varphi},
B(\omega)=\frac{2\alpha m_{\theta}\omega^{1+\frac{1}{\alpha}}}{\varphi},
C(\omega)=\frac{\alpha(m+m_{\theta\theta})}{\varphi}
\omega^{1+\frac{1}{\alpha}}\bigl(\omega-\lambda(t)\bigr)$.
Then we have
\begin{equation*}
(\omega_1)_t=A_1(\omega)\omega_3+B_1(\omega,\omega_1)\omega_2+\widetilde{C}_1(\omega,\omega_1),
\end{equation*}
where $A_1(\omega)=A(\omega), B_1(\omega,\omega_1)=\frac{\partial A}
{\partial\theta}+B(\omega)$
and  $\widetilde{C}_1(\omega,\omega_1)
=\frac{\partial B(\omega)}{\partial\theta}\omega_1
+\frac{\partial C(\omega)}{\partial\theta}$.
 Furthermore, we have
\begin{equation*}
  (\omega_2)_t
=A_2(\omega)\omega_4+B_2(\omega,\omega_1)\omega_3+C_{21}(\omega,\omega_1)\omega_2^2
+C_{22}(\omega,\omega_1)\omega_2+C_{23}(\omega,\omega_1),
\end{equation*}
where
$A_2(\omega)
=A_1(\omega),B_2(\omega,\omega_1)=\frac{\partial A_1}{\partial\theta}+B_1(\omega,\omega_1)$  and   $C_{21}(\omega,\omega_1)\omega_2^2+C_{22}(\omega,\omega_1)\omega_2
+C_{23}(\omega,\omega_1)=\frac{\partial B_1}{\partial\theta}\omega_2
+\frac{\partial\widetilde{C}_1}{\partial\theta}$.

Set $F=\omega_2+\beta\omega_1^2$, where $\beta>0$ is a constant to be chosen later.
A direct computation shows that
\begin{equation*}
  F_t=A_2F_{\theta\theta}+B_2F_\theta+G(\theta,t),
\end{equation*}
where $G(\theta,t)=(-2\beta A_2+C_{21})\omega_2^2+(-2\beta B_2\omega_1+C_{22}+2\beta B_1\omega_1)\omega_2+2\beta \widetilde{C}_1\omega_1+C_{23}$.
Since  $A_2=A =\frac{\alpha m}{\varphi}\omega^{1+\frac{1}{\alpha}}$,
$C_{21}=\frac{\partial A}{\partial\omega}
=\frac{(\alpha+1)m}{\varphi}\omega^{\frac{1}{\alpha}}$,
we have $-2\beta A_2+C_{21}=\frac{m}{\varphi}\omega^{\frac{1}{\alpha}}
\left(\alpha+1-2\alpha\beta\omega\right)$.
Choose $\beta\geq\frac{\alpha+2}{2\alpha c_0}$.
Since $\omega\geq c_0$, it follows that $ -2\beta A_2+C_{21}\leq-\frac{\min_{S^1}m}
{\max_{S^1}\varphi}c_0^{\frac{1}{\alpha}}:=-\delta<0$.
Moreover, from the assumptions and $0<\lambda(t)\leq C_0$, it follows that all the remaining coefficients in $G$ are uniformly bounded.
Hence, there exists a constant $K>0$ such that
$G(\theta,t)\leq-\delta\omega_2^2+K|\omega_2|+K$.

Suppose that $F$ attains its maximum on $S^1\times[0,t_1]$ at an interior point
$(\theta_*,t_*)$.
At this point,
\begin{equation*}
  F_t\geq0,\quad
    F_\theta=0,\quad
    F_{\theta\theta}\leq0.
\end{equation*}
Since $|\omega_1|\leq C_1$, sufficiently large $F$ implies that $\omega_2=F-\beta\omega_1^2$ is sufficiently large and positive.
Therefore, the above implies $G(\theta_*,t_*)<0$, contradicting the evolution equation of $F$.
Therefore, $F$ is bounded above.
It follows that $\omega_2$ is uniformly bounded from above.

Applying the same argument to $\widetilde F=-\omega_2+\beta\omega_1^2$
gives a uniform lower bound for $\omega_2$.
Consequently, $|\omega_{\theta\theta}(\theta,t)|\leq C_2$ on $S^1\times[0,T)$.
\end{proof}


\begin{lemma}\label{10}
Consider the  ALP flow \eqref{1.7}.
For any  integer $k>2$, if $\omega,|\omega_1|,|\omega_2|,\ldots,|\omega_{k-1}|$ are
uniformly bounded from above and $\omega$ has a uniform positive lower bound
on $S^1\times[0,T)$, then $|\omega_k|$ is uniformly bounded from above
on $S^1\times[0,T)$.
\end{lemma}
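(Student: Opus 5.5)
We argue by induction on $k$, following the Bernstein-type argument used for Lemma~\ref{9}. The plan is to differentiate \eqref{2.5} $k$ times in $\theta$ and then run a maximum-principle argument on $F=\omega_k+\beta\omega_{k-1}^2$.

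\textbf{Step 1: the equation for $\omega_k$.} Write \eqref{2.5} as $\omega_t=A(\omega)\omega_2+B(\omega)\omega_1+C(\omega)$, where $A=\frac{\alpha m}{\varphi}\omega^{1+\frac1\alpha}$ depends smoothly on $(\theta,\omega)$, and similarly for $B$ and $C$ (with $0<\lambda(t)\le C_0$). Differentiating $k$ times by Leibniz' rule gives
\begin{equation*}
(\omega_k)_t=A\,\omega_{k+2}+B_k\,\omega_{k+1}+C_k,
\qquad B_k=kA_\theta+B .
\end{equation*}
Here $A_\theta$ denotes the total derivative, which involves only $\omega$ and $\omega_1$. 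The term $C_k$ is a polynomial in $\omega_2,\dots,\omega_k$ whose coefficients are smooth functions of $(\theta,\omega,\omega_1,\lambda)$. For $k\ge3$ the highest derivative $\omega_k$ enters $C_k$ only linearly; it comes from terms such as $\partial_\omega A\,\omega_k\,\omega_2$ and $\partial_\omega C\,\omega_k$. This is the key structural difference from the case $k=2$, where a quadratic term $C_{21}\omega_2^2$ appeared. Under the inductive hypothesis ($c_0\le\omega\le C_0$ and $|\omega_j|\le C$ for $j\le k-1$), we therefore obtain
\begin{equation*}
|C_k|\le K(1+|\omega_k|),
\qquad |B_k|,\ |B_{k-1}|,\ |C_{k-1}|\le K,
\end{equation*}
with $K$ independent of time.

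\textbf{Step 2: the test function.} Set $F=\omega_k+\beta\omega_{k-1}^2$. Using the equation for $\omega_{k-1}$ and the identity $A\omega_{k-1}\omega_{k+1}=\tfrac12 A(\omega_{k-1}^2)_{\theta\theta}-A\omega_k^2$, we get
\begin{equation*}
F_t=AF_{\theta\theta}+B_kF_\theta+G,
\end{equation*}
where
\begin{equation*}
G=-2\beta A\omega_k^2+2\beta(B_{k-1}-B_k)\omega_{k-1}\omega_k+2\beta\omega_{k-1}C_{k-1}+C_k .
\end{equation*}
Since $\omega\ge c_0$, we have $A\ge\frac{\alpha\min m}{\max\varphi}c_0^{1+\frac1\alpha}>0$. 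With any fixed $\beta>0$ (say $\beta=1$), this gives
\begin{equation*}
G\le-\delta\omega_k^2+K'|\omega_k|+K' .
\end{equation*}
There is a minor point here: $C_{k-1}$ contains $\omega_{k-1}$ only linearly and is bounded by the hypothesis, so $C_{k-1}$ causes no difficulty.

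\textbf{Step 3: conclusion.} Fix $t_1<T$ and consider the maximum of $F$ on $S^1\times[0,t_1]$. If it exceeds $\max F(\cdot,0)$ and a large constant $M$ (chosen so that the roots of $-\delta x^2+K'x+K'$ and the bound on $\beta\omega_{k-1}^2$ are dominated), then at that interior maximum $\omega_k=F-\beta\omega_{k-1}^2$ is large and positive. Hence $G<0$, while $F_t\ge0$, $F_\theta=0$ and $F_{\theta\theta}\le0$ force $G\ge0$, a contradiction. So $\omega_k$ is bounded above independently of time. Applying the same argument to $\widetilde F=-\omega_k+\beta\omega_{k-1}^2$ gives the lower bound.

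The main obstacle is Step 1: one must check carefully that for $k\ge3$ no term in $C_k$ is superlinear in $\omega_k$. In particular, terms like $\omega_2\omega_k$ must be treated as linear in $\omega_k$ with bounded coefficient; this uses $k-1\ge2$, so that $\omega_2$ is already controlled. If some stray quadratic term did appear, the fallback is to absorb it by taking $\beta$ large, exactly as in Lemma~\ref{9}.
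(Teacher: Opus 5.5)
Your proposal is correct and follows essentially the same route as the paper. Both use the test function $F=\omega_k+\beta\omega_{k-1}^2$ with fixed $\beta>0$, rely on the observation that for $k>2$ the remainder term is at most linear in $\omega_k$ (so $F_t\le AF_{\theta\theta}+B_kF_\theta-2\beta a_0\omega_k^2+C|\omega_k|+C$), and finish with the same maximum-principle contradiction, using $\widetilde F=-\omega_k+\beta\omega_{k-1}^2$ for the lower bound.
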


\begin{proof}
Rewriting \eqref{2.5}, we have
\begin{equation}\label{3.8}
  \omega_t=A\omega_2+B\omega_1+C.
\end{equation}
Here $A=\frac{\alpha m}{\varphi}\omega^{1+\frac{1}{\alpha}}$,
$B=\frac{2\alpha m_{\theta}}{\varphi}\omega^{1+\frac{1}{\alpha}}$, and
$C=\frac{\alpha(m_{\theta\theta}+m)}{\varphi}
\omega^{1+\frac{1}{\alpha}}\bigl(\omega-\lambda(t)\bigr)$.
Let $D_{\theta}$ denote the total derivative with respect to $\theta$ and let the subscript $\omega$ denote the partial derivative with respect to $\omega$.

Differentiating \eqref{3.8} $k$ and $k-1$ times, respectively, we obtain
\begin{equation}\label{3.9}
  (\omega_k)_t=A\omega_{k+2}+B_k\omega_{k+1}+\mathcal R_k,
\end{equation}
\begin{equation}\label{3.10}
  (\omega_{k-1})_t=A\omega_{k+1}+B_{k-1}\omega_k+\mathcal R_{k-1},
\end{equation}
where $B_j=B+jD_\theta A$, $j=k-1,k$.
The term $\mathcal R_j$ depends only on $\theta$, $t$, $\omega$, $\omega_1,\ldots,\omega_j$.
For $k>2$, the Leibniz rule shows that $\mathcal R_k$ depends at most linearly on $\omega_k$.
The assumed bounds and the uniform bound for $\lambda(t)$ therefore imply that there exists a time-independent constant $C>0$ such that
\begin{equation}\label{3.11}
  |\mathcal R_k|\leq C(1+|\omega_k|),
  \quad
  |\mathcal R_{k-1}|\leq C.
\end{equation}
It also follows from the definition of $B_j$ that $B_{k-1}-B_k=-D_{\theta} A$.

Let $F=\omega_k+\beta\omega_{k-1}^2$, where $\beta>0$ is fixed.
Then
\begin{equation*}
  F_{\theta}=\omega_{k+1}+2\beta\omega_{k-1}\omega_k,
\end{equation*}
and
\begin{equation*}
  F_{\theta\theta}=\omega_{k+2}+2\beta\omega_k^2+2\beta\omega_{k-1}\omega_{k+1}.
\end{equation*}
Using \eqref{3.9} and \eqref{3.10}, we obtain
\begin{equation*}
  F_t =A\omega_{k+2}+B_k\omega_{k+1}+\mathcal R_k+2\beta\omega_{k-1}
  \left(A\omega_{k+1}+B_{k-1}\omega_k+\mathcal R_{k-1}\right).
\end{equation*}
On the other hand,
\begin{equation*}
  AF_{\theta\theta}+B_kF_{\theta}
  =A\omega_{k+2}+B_k\omega_{k+1}+2\beta A\omega_k^2
  +2\beta A\omega_{k-1}\omega_{k+1}+2\beta B_k\omega_{k-1}\omega_k.
\end{equation*}
Consequently,
\begin{equation}\label{3.12}
  \begin{aligned}
  F_t
  =&
  AF_{\theta\theta}+B_kF_{\theta}-2\beta A\omega_k^2+2\beta(B_{k-1}-B_k)\omega_{k-1}\omega_k
  \\
  &\quad
  +\mathcal R_k+2\beta\omega_{k-1}\mathcal R_{k-1}
  \\
  =&
  AF_{\theta\theta}+B_kF_{\theta}-2\beta A\omega_k^2-2\beta D_\theta A\,\omega_{k-1}\omega_k
  \\
  &\quad
  +\mathcal R_k+2\beta\omega_{k-1}\mathcal R_{k-1}.
  \end{aligned}
\end{equation}

By the assumptions of the lemma, both $\omega_{k-1}$ and $D_\theta A$ are uniformly bounded. Moreover, since $\omega$ has a uniform positive lower bound, there exists a constant $a_0>0$ such that  $A\geq a_0$  on $S^1\times[0,T)$.
It follows from \eqref{3.11} and \eqref{3.12} that
\begin{equation}\label{3.13}
  F_t\leq AF_{\theta\theta}+B_kF_{\theta}-2\beta a_0\omega_k^2+C|\omega_k|+C.
\end{equation}

Fix $t_1\in(0,T)$, and suppose that $F$ attains its maximum on $S^1\times[0,t_1]$ at a point $(\theta_*,t_*)$ with $t_*>0$.
At this point,
\begin{equation*}
  F_t\geq0,
  \quad
  F_\theta=0,
  \quad
  F_{\theta\theta}\leq0.
\end{equation*}
Since $\omega_{k-1}$ is uniformly bounded, sufficiently large $F(\theta_*,t_*)$ implies that
$\omega_k(\theta_*,t_*)$ is sufficiently large and positive.
It then follows from \eqref{3.13} that
\begin{equation*}
  0
  \leq
  -2\beta a_0\omega_k^2
  +C|\omega_k|
  +C,
\end{equation*}
which is impossible when $\omega_k$ is sufficiently large.
Thus, $F$ has a time-independent upper bound, and hence $\omega_k$ is uniformly bounded from above.

Similarly, applying the same argument to $\widetilde F=-\omega_k+\beta\omega_{k-1}^2$
gives a uniform lower bound for $\omega_k$. Consequently,
\begin{equation*}
  |\omega_k|\leq C_k,
  \quad
  (\theta,t)\in S^1\times[0,T).
\end{equation*}

\end{proof}

Now we can state the global existence of the flow \eqref{1.7}.

\begin{lemma}\label{11}
Assume that the initial curve $\gamma_0$ is a smooth convex closed curve.
The flow \eqref{1.7} has a smooth convex solution and the solution exists on the time
interval $[0,\infty)$.
\end{lemma}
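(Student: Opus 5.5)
The plan is to combine the local existence and continuation criterion of Lemma~\ref{4} with the uniform estimates of Section~3. By Lemma~\ref{4}, there is a unique smooth solution on a maximal interval $[0,T_{\max})$, and it is enough to show that $\kappa$ cannot blow up at a finite time. By Lemma~\ref{3} the evolving curves stay convex, so $\omega=\kappa_\sigma^\alpha>0$ throughout. Lemma~\ref{6} then gives $\omega\le C_0$ on $S^1\times[0,T_{\max})$ with $C_0$ independent of time. Since $\kappa=\omega^{1/\alpha}/\varphi\le C_0^{1/\alpha}/\Lambda_0$, the curvature stays bounded.

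Suppose, for contradiction, that $T_{\max}<\infty$. We need a positive lower bound for $\omega$ on $[0,T_{\max})$ in order to apply the higher-order estimates, which require uniform parabolicity ($A\ge a_0>0$).
\begin{itemize}
\item For $0<\alpha\le1$ we use Lemma~\ref{8}. It relies on Lemmas~\ref{6} and~\ref{7}, and gives a time-independent bound $\kappa\ge c_0$.
\item For $\alpha>1$ we use the finite-time argument from the proof of Lemma~\ref{3}. On $[0,T_{\max}]$ the coefficient $c(\theta,t)$ is bounded by some $B$, because $\omega\le C_0$ and $0<\lambda\le C_0$. The maximum principle applied to $e^{(B+1)t}\omega$ then gives $\omega\ge e^{-(B+1)T_{\max}}\min\omega_0>0$ on $[0,T_{\max})$. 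This bound depends on $T_{\max}$, but that is harmless for a finite-time continuation argument.
\end{itemize}

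With $0<c\le\omega\le C_0$ in hand, we bound the derivatives in order. Lemma~\ref{7} bounds $\omega_\theta$, and Lemma~\ref{9} bounds $\omega_{\theta\theta}$. Lemma~\ref{10}, applied inductively in $k$, bounds every $\omega_k$ on $S^1\times[0,T_{\max})$. Because $\kappa=\omega^{1/\alpha}/\varphi$ with $\omega$ bounded away from $0$, these bounds transfer to $|\partial_\theta^i\kappa|\le C_i$ for every $i$. The equation \eqref{2.4} then bounds $\partial_t\kappa$ and its $\theta$-derivatives as well.

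By Arzel\`a--Ascoli, $\kappa(\cdot,t)$ converges in $C^\infty(S^1)$ to a smooth, positive limit as $t\to T_{\max}$. We next check that this limit gives a closed curve. Closedness is the condition $\int_0^{2\pi}\kappa^{-1}e^{i\theta}\,d\theta=0$, which is preserved by the flow (its support-function formulation keeps the curve closed) and survives in the limit. Restarting the flow from the limit curve via Lemma~\ref{4} extends the solution past $T_{\max}$, contradicting maximality. Hence $T_{\max}=\infty$, and convexity for all time follows from Lemma~\ref{3}.

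The main obstacle is the lower bound on $\omega$ when $\alpha>1$. Lemma~\ref{8} does not cover this case, and Lemmas~\ref{9} and~\ref{10} need uniform parabolicity. The point is that global existence only requires this bound on a finite interval, and the time-dependent bound from Lemma~\ref{3} is enough there. A time-uniform lower bound for $\alpha>1$ is only needed later, for the asymptotic analysis, and not for this lemma.
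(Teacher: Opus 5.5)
Your proposal is correct and follows the paper's proof essentially step for step. You argue by contradiction with $T_{\max}<\infty$, take the positive lower bound on $\omega$ from Lemma~\ref{8} when $0<\alpha\le1$ and from the finite-time maximum-principle bound $\omega\ge e^{-(B_0+1)T_{\max}}\min_{S^1}\omega_0$ when $\alpha>1$, then apply Lemmas~\ref{7}, \ref{9}, \ref{10} and the continuation criterion of Lemma~\ref{4}; your extra details (convergence of $\kappa(\cdot,t)$ as $t\to T_{\max}$ and preservation of the closing condition $\int_0^{2\pi}\kappa^{-1}e^{i\theta}\,d\theta=0$) only spell out what the paper leaves inside Lemma~\ref{4}.
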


\begin{proof}
Let $[0,T_{\max})$ be the maximal existence interval of the solution.
Suppose, for contradiction, that $T_{\max}<\infty$. To obtain the a priori derivative estimates required for continuation, we apply Lemmas~\ref{7}, \ref{9} and~\ref{10}.

For $0<\alpha\leq1$, Lemma~\ref{8}, together with $\omega=(\varphi\kappa)^{\alpha}$ and
$\min_{S^1}\varphi>0$, provides the required positive lower bound for $\omega$.

For $\alpha>1$, Lemma~\ref{6} and \eqref{2.3} imply that $0<\omega(\theta,t)\leq C_0$ and $0<\lambda(t)\leq C_0$.
Hence, $\bigl|\frac{\alpha(m+m_{\theta\theta})}{\varphi}\omega^{\frac{1}{\alpha}}
(\omega-\lambda(t))\bigr|\leq B_0$ for $(\theta,t)\in S^1\times[0,T_{\max})$.
Then, we have
\begin{equation*}
\omega(\theta,t)
\geq e^{-(B_0+1)t}\min_{S^1}\omega(\cdot,0)
\geq e^{-(B_0+1)T_{\max}}\min_{S^1}\omega(\cdot,0)>0,
\quad
(\theta,t)\in S^1\times[0,T_{\max}).
\end{equation*}

Thus, for every $\alpha>0$, $\omega$ has a positive lower bound on $S^1\times[0,T_{\max})$. Together with Lemmas~\ref{6} and~\ref{7}, this verifies the assumptions of Lemma~\ref{9}. Lemmas~\ref{9} and~\ref{10} then yield the derivative estimates required for continuation. Consequently, by the continuation criterion in Lemma~\ref{4}, the solution can be extended beyond $T_{\max}$, contradicting the definition of the maximal existence time. Therefore, $T_{\max}=\infty$, and the solution exists for all time.

\end{proof}


\section{The convergence of the anisotropic curvature}

The estimates on $\omega$ and $\omega_\theta$ in the last section
imply the compactness of the anisotropic curvature function.
We next prove the convergence of the anisotropic curvature via the
Lyapunov functional method.
Furthermore, the convergence result gives a time-independent positive lower bound for the anisotropic curvature, which yields time-independent bounds for its higher-order derivatives.

\begin{lemma}\label{12}
Under the flow \eqref{1.7}, we have
\begin{equation}\label{4.1}
\frac{dA(t)}{dt}\to0,
\quad
t\to\infty.
\end{equation}
\end{lemma}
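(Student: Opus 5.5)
We plan to use the standard monotone-bounded-plus-uniform-continuity (Barbalat-type) argument. By Lemma~\ref{2}, $A(t)$ is nondecreasing. By the anisotropic isoperimetric inequality \eqref{2.6} and the preservation of $L_\sigma$, we have $A(t)\le L_\sigma^2(0)/(4|W_\sigma|)$. Hence $A(t)$ converges to a finite limit as $t\to\infty$. Moreover $A'(t)\ge0$, so
\begin{equation*}
\int_0^\infty \frac{dA}{dt}\,dt<\infty .
\end{equation*}
It therefore suffices to show that $t\mapsto A'(t)$ is uniformly continuous on $[0,\infty)$, for instance by showing that $A''(t)$ is bounded uniformly in time. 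A nonnegative, integrable, uniformly continuous function tends to $0$, which gives \eqref{4.1}.

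Write $A'(t)$ in the $\theta$-parametrization using \eqref{2.10} and \eqref{2.3}:
\begin{equation*}
\frac{dA}{dt}=-\int_0^{2\pi}\frac{m\,\omega}{\kappa}\,d\theta+\lambda(t)\int_0^{2\pi}\frac{m}{\kappa}\,d\theta,
\qquad \frac{1}{\kappa}=\varphi\,\omega^{-1/\alpha}.
\end{equation*}
Differentiating in $t$ produces integrals of $\omega_t$ multiplied by powers of $\omega$ and smooth functions of $\theta$, together with $\lambda'(t)$. By \eqref{2.3}, $\lambda'(t)$ is itself an integral of $m\varphi\,\omega_t$.

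The obstacle is that no uniform positive lower bound for $\omega$ is available, so factors like $\omega^{-1/\alpha}$ and $\omega^{-1-1/\alpha}\omega_t$ are dangerous. We circumvent this by working in arc length instead of $\theta$ wherever a negative power appears. Using the evolution equation \eqref{2.5},
\begin{equation*}
\frac{\partial}{\partial t}\Bigl(\frac1\kappa\Bigr)=-\frac{\alpha^{-1}\varphi\,\omega_t}{\omega^{1+1/\alpha}}=-\bigl(m\omega_{\theta\theta}+2m_\theta\omega_\theta+(m+m_{\theta\theta})(\omega-\lambda)\bigr).
\end{equation*}
This is $\partial_t h+\partial_t h''$, and it contains no negative powers of $\omega$.

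Hence
\begin{equation*}
\frac{d}{dt}\int_0^{2\pi}\frac{m}{\kappa}\,d\theta \quad\text{and}\quad \frac{d}{dt}\int_0^{2\pi}\frac{m\,\omega}{\kappa}\,d\theta
\end{equation*}
reduce to integrals of $\omega,\omega_\theta,\omega_{\theta\theta}$ times $m$ and $1/\kappa$. Integrating by parts removes the $\omega_{\theta\theta}$ terms against smooth functions of $\theta$. The remaining term is $\int m\varphi\omega^{-1/\alpha}\omega_t$, which equals $\int m\,\alpha^{-1}\omega^{1+\ldots}$-type expressions with nonnegative powers times $\omega_{\theta\theta}$. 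These again integrate by parts into $\omega_\theta^2$ and $\omega\,\omega_\theta$ terms with coefficients that are positive powers of $\omega$. Lemmas~\ref{6} and~\ref{7} bound all of these uniformly. The integrals $\int m/\kappa\,d\theta\le C L$ are bounded by the preserved $L_\sigma$, and $\lambda(t)\le C_0$.

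The term $\lambda'(t)=\int m\varphi\,\omega_t/\int m\varphi$ is handled the same way. Substituting \eqref{2.5} gives $\omega^{1+1/\alpha}$ times second-order terms, and one integration by parts leaves only quantities bounded by $C_0$ and $C_1$.

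This yields $|A''(t)|\le C$ independent of $t$, so $A'$ is uniformly Lipschitz, and the Barbalat argument above concludes the proof. The main step I expect to require care is organizing the integrations by parts so that no negative power of $\omega$ survives. If that fails, I would instead bound $A'(t_2)-A'(t_1)$ directly using the time-independent $C^1$ bounds together with a Hölder-in-time estimate for $\omega$ obtained from the equation.
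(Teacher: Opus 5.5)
Your proposal is correct and follows essentially the same route as the paper. You write $A'(t)$ in the $\theta$-parametrization and differentiate using \eqref{2.5}; the $\omega^{-1/\alpha}$ factors cancel against $\omega^{1+1/\alpha}$, so the only negative power left is $\int_0^{2\pi} m\varphi\,\omega^{-1/\alpha}d\theta=\int_\gamma m\,ds$, bounded by the preserved $L_\sigma$. Integrating by parts gives a uniform bound on $|A''|$ from Lemmas~\ref{6} and~\ref{7} — the paper's formula \eqref{4.2} is exactly that computation — and you conclude from $\int_0^\infty A'\,dt<\infty$ just as the paper does.
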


\begin{proof}
By \eqref{2.10}, the evolution of the enclosed area satisfies
\begin{equation*}
  \frac{dA}{dt}
  =-\int_0^{2\pi}m\varphi\omega^{1-\frac{1}{\alpha}}d\theta
  +\frac{\int_0^{2\pi}m\varphi\omega d\theta}{\int_0^{2\pi}m\varphi d\theta}
  \int_0^{2\pi}m\varphi\omega^{-\frac{1}{\alpha}} d\theta
  \geq0.
\end{equation*}
Thus by \eqref{2.5} we obtain
\begin{align}
\frac{d^2A}{dt^2}
&=-(\alpha-1)\int_0^{2\pi}m\omega
\left(m\omega_{\theta\theta}+2m_{\theta}\omega_{\theta}
+(m+m_{\theta\theta})(\omega-\lambda)\right)d\theta
\notag\\
&\quad
+\frac{\alpha\int_0^{2\pi}m\varphi\omega^{-\frac{1}{\alpha}}d\theta}
{\int_0^{2\pi}m\varphi d\theta}\int_0^{2\pi}m\omega^{1+\frac{1}{\alpha}}
\left(m\omega_{\theta\theta}+2m_{\theta}\omega_{\theta}
+(m+m_{\theta\theta})(\omega-\lambda)\right)d\theta
\notag\\
&\quad
-\frac{\int_0^{2\pi}m\varphi\omega d\theta}
{\int_0^{2\pi}m\varphi d\theta}\int_0^{2\pi}m\left(m\omega_{\theta\theta}
+2m_{\theta}\omega_{\theta}+(m+m_{\theta\theta})(\omega-\lambda)\right)d\theta
\notag\\
\label{4.2}
&=
(\alpha-1)\int_0^{2\pi}m^2\omega_{\theta}^2d\theta
-(\alpha-1)\int_0^{2\pi}m\omega(m+m_{\theta\theta})(\omega-\lambda)d\theta
\notag\\
&\quad
-(\alpha+1)\frac{\int_0^{2\pi}m\varphi\omega^{-\frac{1}{\alpha}}d\theta}
{\int_0^{2\pi}m\varphi d\theta}
\int_0^{2\pi}m^2\omega^{\frac{1}{\alpha}}\omega_{\theta}^2d\theta
\notag\\
&\quad
+\alpha\frac{\int_0^{2\pi}m\varphi\omega^{-\frac{1}{\alpha}}d\theta}
{\int_0^{2\pi}m\varphi d\theta}
\int_0^{2\pi}m\omega^{1+\frac{1}{\alpha}}(m+m_{\theta\theta})(\omega-\lambda)d\theta
\notag\\
&\quad
-\frac{\int_0^{2\pi}m\varphi\omega d\theta}{\int_0^{2\pi}m\varphi d\theta}
\int_0^{2\pi}m(m+m_{\theta\theta})(\omega-\lambda)d\theta .
\end{align}
Lemma~\ref{6}, Lemma~\ref{7} and \eqref{2.3} yield uniform bounds for $\omega$, $\omega_{\theta}$ and $\lambda(t)$, respectively.
Combining with the inequality
\begin{equation*}
  L_{\sigma}^2(0)=L_{\sigma}^2(t)\geq4|W_{\sigma}|A(t)\geq 4|W_{\sigma}|A(0),
\end{equation*}
and
\begin{equation*}
\int_0^{2\pi}m\varphi\omega^{-\frac{1}{\alpha}}d\theta
=\int_{\gamma}mds
\leq\left(\max_{S^1}\frac{m}{\sigma}\right)L_{\sigma}(0),
\end{equation*}
we know there exists a time-independent constant $C$ such that
\begin{equation}\label{4.3}
\left|\frac{d^2A(t)}{dt^2}\right|\leq C,
\quad
\forall t\in[0,\infty).
\end{equation}
Note that
\begin{equation*}
  \int_0^{\infty}\frac{dA}{d\tau}d\tau=A(\infty)-A(0)<\infty.
\end{equation*}
Using \eqref{4.3}, we can deduce that
\begin{equation*}
  \frac{dA}{dt}(t)\rightarrow 0,
  \quad
  t\rightarrow \infty.
  \qedhere
\end{equation*}
\end{proof}

We recall the following inequality due to Andrews~\cite{1}.

\begin{lemma}\label{13}
Let $M$ be a compact Riemannian manifold with volume form $d\mu$, and let $\xi$ be a continuous function on $M$.
Then, for any decreasing continuous function $F:\mathbb R\to\mathbb R$, one has
\begin{equation}\label{4.5}
\int_M\xi d\mu\int_MF(\xi)d\mu
\geq
\int_Md\mu\int_M\xi F(\xi)d\mu.
\end{equation}
If $F$ is strictly decreasing, then equality holds if and only if $\xi$ is constant on $M$.
\end{lemma}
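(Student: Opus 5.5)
The plan is the classical Chebyshev symmetrization trick. We integrate a pointwise sign condition over the product $M\times M$ and then expand. Note first that everything is well defined: $M$ is compact and $\xi$ is continuous, so $\xi(M)$ is a compact interval on which $F$ is continuous, and hence $\xi$, $F(\xi)$ and $\xi F(\xi)$ are bounded continuous functions on $M$.

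\textbf{Key step.} Since $F$ is decreasing, for any $x,y\in M$ the numbers $\xi(x)-\xi(y)$ and $F(\xi(x))-F(\xi(y))$ have opposite signs (or one of them vanishes). Therefore
\begin{equation*}
  \Phi(x,y):=\bigl(\xi(x)-\xi(y)\bigr)\bigl(F(\xi(x))-F(\xi(y))\bigr)\leq 0
  \quad\text{for all } (x,y)\in M\times M .
\end{equation*}
Integrating against $d\mu(x)\,d\mu(y)$ and expanding the four terms, the two diagonal terms each contribute $\int_M d\mu\int_M \xi F(\xi)\,d\mu$, and the two cross terms each contribute $-\int_M\xi\,d\mu\int_M F(\xi)\,d\mu$. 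This gives
\begin{equation*}
  0\geq \iint_{M\times M}\Phi\,d\mu\,d\mu
  =2\int_M d\mu\int_M \xi F(\xi)\,d\mu-2\int_M\xi\,d\mu\int_M F(\xi)\,d\mu ,
\end{equation*}
which is exactly \eqref{4.5}.

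\textbf{Equality case.} If $\xi$ is constant, both sides of \eqref{4.5} coincide trivially. Conversely, suppose equality holds and $F$ is strictly decreasing. Then $\iint\Phi=0$. Since $\Phi$ is continuous and nonpositive, and the Riemannian product measure charges every nonempty open set, it follows that $\Phi\equiv0$ on $M\times M$. Strict monotonicity of $F$ makes $\Phi(x,y)<0$ whenever $\xi(x)\neq\xi(y)$, so $\xi(x)=\xi(y)$ for all $x,y$. No connectedness of $M$ is needed, because the argument compares arbitrary pairs of points.

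The only point requiring care is the passage from vanishing of the integral to pointwise vanishing. This rests on the continuity of $\Phi$ together with the full support of the volume measure, and I do not expect any genuine obstacle there.
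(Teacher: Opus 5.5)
The paper does not prove this lemma; it only quotes it from Andrews~\cite{1}. Your symmetrization argument over $M\times M$ is correct and complete, including the equality case. Since $\Phi$ is continuous and nonpositive and the product volume measure charges every nonempty open set, $\iint\Phi=0$ forces $\Phi\equiv0$, and strict monotonicity then forces $\xi(x)=\xi(y)$ for every pair of points.

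A more common one-variable form of the same Chebyshev idea compares $\xi$ with its mean $\bar\xi=\int_M\xi\,d\mu/\int_M d\mu$. The integrand of $\int_M(\xi-\bar\xi)\bigl(F(\xi)-F(\bar\xi)\bigr)\,d\mu$ is nonpositive. When you expand it, the $F(\bar\xi)$ terms cancel because $\int_M(\xi-\bar\xi)\,d\mu=0$, and what remains is \eqref{4.5} divided by $\int_M d\mu$. That version avoids the product measure and shows the constant in the equality case is $\bar\xi$. Yours is symmetric and evaluates $F$ only on $\xi(M)$, whereas the mean version needs $F$ at $\bar\xi$, which may lie outside $\xi(M)$ when $M$ is disconnected.

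Your argument uses only that $d\mu$ is a finite positive measure with full support and that $F$ is continuous and decreasing on $\xi(M)$. That is exactly the generality the paper needs. In Lemma~\ref{14} it applies \eqref{4.5} with the weighted measure $m\varphi\,d\theta$ and with $F(\xi)=(\xi^{1/\alpha}+\varepsilon)^{-1}$, which is defined only for $\xi\geq0$.

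One small slip: $\xi(M)$ need not be an interval when $M$ is disconnected, a case you explicitly allow. This is harmless, since continuity of $F\circ\xi$ only needs $F$ continuous on $\xi(M)$.
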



\begin{lemma}\label{14}
For the flow \eqref{1.7}, we have
\begin{equation}\label{4.6}
\lim_{t\to\infty}\left\|\kappa_{\sigma}(\cdot,t)
-\frac{2|W_{\sigma}|}{L_{\sigma}(0)}\right\|_{C^k(S^1)}=0,
\end{equation}
for any $k\in N$.
\end{lemma}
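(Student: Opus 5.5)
The plan is to use the Lyapunov structure given by Lemma~\ref{12} together with Andrews' inequality, Lemma~\ref{13}. Write $d\mu=m\varphi\,d\theta$ on $S^1$ and $\xi=\omega=\kappa_\sigma^{\alpha}$. By \eqref{2.10}, written in the $\theta$-parametrization as in the proof of Lemma~\ref{12},
\begin{equation*}
\frac{dA}{dt}=\frac{1}{\int_{S^1}d\mu}\left(\int_{S^1}\xi\,d\mu\int_{S^1}\xi^{-\frac1\alpha}d\mu-\int_{S^1}d\mu\int_{S^1}\xi\,\xi^{-\frac1\alpha}d\mu\right).
\end{equation*}
This is exactly the left-minus-right side of \eqref{4.5} with the strictly decreasing function $F(\xi)=\xi^{-1/\alpha}$, divided by $\int d\mu$.

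\textbf{Step 1 (compactness).} By Lemmas~\ref{6} and~\ref{7}, $\omega(\cdot,t)$ is uniformly bounded and uniformly Lipschitz in $\theta$. Hence, by Arzel\`a--Ascoli, every sequence $t_j\to\infty$ has a subsequence with $\omega(\cdot,t_j)\to\omega_\infty$ uniformly, where $\omega_\infty\ge0$ is Lipschitz.

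\textbf{Step 2 (positivity of the limit).} This is the main obstacle, since $\xi^{-1/\alpha}$ is singular at $0$. I would use that $\int_{S^1} m\varphi\,\omega^{-1/\alpha}d\theta=\int_\gamma m\,ds$ is bounded in time by $(\max m/\sigma)L_\sigma(0)$. Fatou's lemma then gives $\int_{S^1}\omega_\infty^{-1/\alpha}d\mu<\infty$.
\begin{itemize}
\item For $0<\alpha\le1$: if $\omega_\infty(\theta_0)=0$, the Lipschitz bound gives $\omega_\infty\le C|\theta-\theta_0|$, so $\omega_\infty^{-1/\alpha}\ge c|\theta-\theta_0|^{-1/\alpha}$ is non-integrable, a contradiction. (Lemma~\ref{8} already gives this directly.)
\item For $\alpha>1$: I would avoid pointwise positivity at first. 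Apply Lemma~\ref{13} to the truncations $F_\varepsilon(\xi)=(\xi+\varepsilon)^{-1/\alpha}$ to see that the deficit functional is a limit of nonnegative quantities. Then pass $dA/dt(t_j)\to0$ (Lemma~\ref{12}) to the limit, using dominated convergence on $\{\omega_\infty>\delta\}$ and the uniform integrability of $\omega^{-1/\alpha}$ from the bound above. This should yield
\begin{equation*}
\int\omega_\infty\,d\mu\int\omega_\infty^{-\frac1\alpha}d\mu=\int d\mu\int\omega_\infty^{1-\frac1\alpha}d\mu .
\end{equation*}
The equality case of Lemma~\ref{13}, in its strict form applied on the set where $\omega_\infty>0$ (a rearrangement/Chebyshev argument), then forces $\omega_\infty$ to be constant. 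Finiteness of $\int\omega_\infty^{-1/\alpha}d\mu$ rules out the constant $0$.
\end{itemize}
In either case $\omega_\infty\equiv c^\alpha$ for a constant $c>0$, where $c$ is the limiting value of $\kappa_\sigma$.

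\textbf{Step 3 (identifying the constant).} Along the flow,
\begin{equation*}
L_\sigma(0)=L_\sigma(t)=\int_{S^1}\frac{\sigma\varphi}{\kappa_\sigma}\,d\theta ,
\end{equation*}
which converges to $c^{-1}\int\sigma\varphi\,d\theta=2|W_\sigma|/c$ by \eqref{1.6}, using uniform convergence and the positivity just obtained. Hence $c=2|W_\sigma|/L_\sigma(0)$ independently of the subsequence, so the whole family converges: $\kappa_\sigma(\cdot,t)\to c$ uniformly as $t\to\infty$.

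\textbf{Step 4 (higher regularity).} Uniform convergence to a positive constant gives a time-independent lower bound $\omega\ge c^\alpha/2$ for $t\ge t_0$. This is the hypothesis needed in Lemmas~\ref{9} and~\ref{10}; applying them on $[t_0,\infty)$ gives uniform bounds on all $\omega_k$. Interpolating between the $C^0$ convergence and these $C^{k+1}$ bounds (or using Arzel\`a--Ascoli again together with uniqueness of the limit) yields $\|\omega-c^\alpha\|_{C^k}\to0$. Since $\kappa_\sigma=\omega^{1/\alpha}$ with $\omega$ bounded away from $0$, this gives \eqref{4.6}.
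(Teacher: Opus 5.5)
Your outline follows the paper's proof: compactness from Lemmas~\ref{6} and~\ref{7}, $dA/dt\to0$ from Lemma~\ref{12}, and Fatou to get $\int\omega_\infty^{-1/\alpha}d\mu<\infty$ (so $\omega_\infty>0$ a.e.). It then uses a truncated form of Lemma~\ref{13}, the double-integral equality argument, identification of the constant via $L_\sigma(0)$, and the upgrade to $C^k$ through Lemmas~\ref{9} and~\ref{10}.

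The step that fails as written is the passage to the limit for $\alpha>1$. You invoke ``uniform integrability of $\omega^{-1/\alpha}$ from the bound above''. That bound is only $\int_{S^1}\omega^{-1/\alpha}d\mu=\int_\gamma m\,ds\le\max_{S^1}(m/\sigma)\,L_\sigma(0)$, an $L^1$ bound, and an $L^1$ bound does not give uniform integrability. Your estimates are an upper bound and a Lipschitz bound on $\omega$, plus this $L^1$ bound on $\omega^{-1/\alpha}$. None of them prevents $\omega(\cdot,t_j)$ from being of size $\varepsilon_j$ on an interval of length $\varepsilon_j^{1/\alpha}$ around a zero of $\omega_\infty$. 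Geometrically, this is a nearly flat side carrying a fixed amount of weighted length. In that case $\lim_j\int\omega(\cdot,t_j)^{-1/\alpha}d\mu$ strictly exceeds $\int\omega_\infty^{-1/\alpha}d\mu$, and dominated convergence on $\{\omega_\infty>\delta\}$ cannot control the remainder. So the equality $\int\omega_\infty\,d\mu\int\omega_\infty^{-1/\alpha}d\mu=\int d\mu\int\omega_\infty^{1-1/\alpha}d\mu$ does not follow directly from $dA/dt(t_j)\to0$.

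The repair is that you only need a one-sided limit, and Fatou supplies it in the right direction.
\begin{itemize}
\item Pass to a further subsequence with $\int\omega(\cdot,t_j)^{-1/\alpha}d\mu\to\ell$.
\item Since $\omega^{1-1/\alpha}\to\omega_\infty^{1-1/\alpha}$ uniformly when $\alpha>1$, the limit of $dA/dt(t_j)$ gives $\ell\int\omega_\infty\,d\mu=\int d\mu\int\omega_\infty^{1-1/\alpha}d\mu$.
\item Fatou gives $\int\omega_\infty^{-1/\alpha}d\mu\le\ell$, which yields $\int\omega_\infty\,d\mu\int\omega_\infty^{-1/\alpha}d\mu\le\int d\mu\int\omega_\infty^{1-1/\alpha}d\mu$.
\item Your truncated inequality with $F_\varepsilon$ gives the reverse inequality after letting $\varepsilon\to0$. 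Use monotone convergence for $\int F_\varepsilon(\omega_\infty)d\mu$, which is finite by Fatou. Use dominated convergence for $\int\omega_\infty F_\varepsilon(\omega_\infty)d\mu$, dominated by $\omega_\infty^{1-1/\alpha}$.
\end{itemize}
Equality therefore holds and your equality-case argument applies. This sandwich is exactly what the paper does in \eqref{4.7}--\eqref{4.10}. With this change the rest of your argument goes through.
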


\begin{proof}
We first prove the $C^0$ convergence.
Let $\{t_i\}_{i=1}^{\infty}$ be an arbitrary sequence tending to infinity.
By the uniform bounds for $\omega$ and $\omega_{\theta}$ and the Arzel\`a--Ascoli theorem, there exists a subsequence, still denoted by $\{t_i\}_{i=1}^{\infty}$,
such that $\kappa_{\sigma}(\theta,t_i)\longrightarrow\kappa_{\sigma}^*(\theta)$
in $C^0(S^1)$ as $i\to\infty$.
When $\alpha>1$, we also have  $\kappa_{\sigma}^{\alpha-1}(\cdot,t_i)
\longrightarrow(\kappa_{\sigma}^*)^{\alpha-1}$  in $C^0(S^1)$.
For $0<\alpha\leq1$, the same conclusion follows from the positive lower bound in Lemma~\ref{8}.

Since the anisotropic length is preserved, we have
\begin{equation*}
\int_0^{2\pi}\frac{m\varphi}{\kappa_{\sigma}(\theta,t_i)}d\theta
=\int_{\gamma(t_i)}mds
\leq\max_{S^1}\frac{m}{\sigma}\int_{\gamma(t_i)}\sigma ds
=\max_{S^1}\frac{m}{\sigma}L_{\sigma}(0).
\end{equation*}
Since these integrals are uniformly bounded, we may choose a further subsequence, still denoted by $\{t_i\}$, along which $\int_0^{2\pi}\frac{m\varphi}{\kappa_{\sigma}(\theta,t_i)}d\theta$ converges as $i\to\infty$.
By \eqref{4.1}, we obtain
\begin{equation}\label{4.7}
0=\lim_{i\to\infty}\frac{dA}{dt}(t_i)
=-\int_0^{2\pi}m\varphi(\kappa_{\sigma}^*)^{\alpha-1}d\theta
+\frac{\int_0^{2\pi}
m\varphi(\kappa_{\sigma}^*)^{\alpha}d\theta}{\int_0^{2\pi}m\varphi d\theta}
\lim_{i\to\infty}\int_0^{2\pi}\frac{m\varphi}{\kappa_{\sigma}(\theta,t_i)}d\theta.
\end{equation}

By Fatou's lemma, we have
\begin{equation*}
\int_0^{2\pi}\frac{m\varphi}{\kappa_{\sigma}^*}d\theta
\leq
\lim_{i\to\infty}\int_0^{2\pi}\frac{m\varphi}{\kappa_{\sigma}(\theta,t_i)}d\theta
<\infty.
\end{equation*}
Thus, $\kappa_\sigma^*>0$ almost everywhere on $S^1$,
and it follows from \eqref{4.7} that
\begin{equation}\label{4.8}
\int_0^{2\pi}\frac{m\varphi}{\kappa_{\sigma}^*}d\theta
\int_0^{2\pi}m\varphi(\kappa_{\sigma}^*)^{\alpha}d\theta
\leq
\int_0^{2\pi}m\varphi d\theta
\int_0^{2\pi}m\varphi(\kappa_{\sigma}^*)^{\alpha-1}d\theta.
\end{equation}

On the other hand, set
\begin{equation*}
F(\xi)=\frac{1}{\xi^{\frac{1}{\alpha}}+\varepsilon},
\quad
\xi\geq0,
\quad
\varepsilon>0.
\end{equation*}
Applying inequality \eqref{4.5}, we have
\begin{equation*}
\int_0^{2\pi}m\varphi(\kappa_{\sigma}^*)^{\alpha}d\theta
\int_0^{2\pi}\frac{m\varphi}{\kappa_{\sigma}^*+\varepsilon}d\theta
\geq
\int_0^{2\pi}m\varphi d\theta
\int_0^{2\pi}\frac{m\varphi(\kappa_{\sigma}^*)^\alpha}
{\kappa_{\sigma}^*+\varepsilon}d\theta.
\end{equation*}
Letting $\varepsilon\to0^+$, the dominated convergence
theorem gives
\begin{equation}\label{4.9}
\int_0^{2\pi}m\varphi(\kappa_{\sigma}^*)^{\alpha}d\theta
\int_0^{2\pi}\frac{m\varphi}{\kappa_{\sigma}^*}d\theta
\geq
\int_0^{2\pi}m\varphi d\theta
\int_0^{2\pi}m\varphi(\kappa_\sigma^*)^{\alpha-1}d\theta.
\end{equation}

Combining \eqref{4.8} and \eqref{4.9}, we obtain
\begin{equation*}
\int_0^{2\pi}m\varphi(\kappa_{\sigma}^*)^{\alpha}d\theta
\int_0^{2\pi}\frac{m\varphi}{\kappa_{\sigma}^*}d\theta
=
\int_0^{2\pi}m\varphi d\theta
\int_0^{2\pi}m\varphi(\kappa_{\sigma}^*)^{\alpha-1}d\theta.
\end{equation*}
Equivalently,
\begin{equation}\label{4.10}
\int_0^{2\pi}\int_0^{2\pi}m(x)m(y)\varphi(x)\varphi(y)
\left(\frac{1}{\kappa_{\sigma}^*(x)}-\frac{1}{\kappa_{\sigma}^*(y)}\right)
((\kappa_{\sigma}^*(x))^{\alpha}-(\kappa_{\sigma}^*(y))^{\alpha})
dxdy=0.
\end{equation}
Since the integrand in \eqref{4.10} is nonpositive almost everywhere and $\kappa_{\sigma}^*>0$ almost everywhere, we conclude that $\kappa_{\sigma}^*$ is constant almost everywhere.
By continuity, $\kappa_{\sigma}^*\equiv C^*$ on $S^1$ for some constant $C^*>0$.

Using the preservation of the anisotropic length, we have
\begin{equation*}
L_{\sigma}(0)
=\lim_{i\to\infty}\int_0^{2\pi}\frac{\sigma\varphi}{\kappa_{\sigma}(\theta,t_i)}d\theta
=\frac{1}{C^*}\int_0^{2\pi}\sigma\varphi d\theta
=\frac{2|W_{\sigma}|}{C^*}.
\end{equation*}
Hence,
\begin{equation*}
C^*=\frac{2|W_{\sigma}|}{L_\sigma(0)}.
\end{equation*}
Since the original sequence was arbitrary and the limit does not depend on the choice of subsequence, we obtain
\begin{equation*}
\kappa_\sigma(\cdot,t)\longrightarrow\frac{2|W_{\sigma}|}{L_{\sigma}(0)}
\quad\text{in }C^0(S^1)
\quad\text{as }t\to\infty.
\end{equation*}

After the above convergence is established, the anisotropic curvature has a time-independent positive lower bound.
Lemmas~\ref{7}, \ref{9} and~\ref{10} then give time-independent bounds for all higher-order spatial derivatives of $\omega=\kappa_{\sigma}^{\alpha}$.
Together with the two-sided bounds for $\omega$, this also gives time-independent bounds for all higher-order spatial derivatives of $\kappa_{\sigma}$.
Thus, the above $C^0$ convergence improves to $C^k$ convergence for every positive integer $k$.
\end{proof}


\begin{thebibliography}{99}

\bibitem{1}
B.~Andrews,
\emph{Evolving convex curves},
Calc. Var. Partial Differential Equations
\textbf{7} (1998), 315--371.

\bibitem{2}
B.~Andrews, Y.~Lei, Y.~Wei, and C.~Xiong,
\emph{Volume preserving flows in anisotropic geometries},
Calc. Var. Partial Differential Equations
\textbf{64} (2025), no.~9, Paper No.~287, 49~pp.

\bibitem{3}
S.~Angenent and M.~E.~Gurtin,
\emph{Multiphase thermomechanics with interfacial structure,
II: Evolution of an isothermal interface},
Arch. Ration. Mech. Anal.
\textbf{108} (1989), 323--391.

\bibitem{4}
S.~Angenent and M.~E.~Gurtin,
\emph{Anisotropic motion of a phase interface:
Well-posedness of the initial value problem and qualitative
properties of the interface},
J. Reine Angew. Math.
\textbf{446} (1994), 1--47.

\bibitem{5}
M.~Bene\v{s}, S.~Yazaki, and M.~Kimura,
\emph{Computational studies of non-local anisotropic
Allen--Cahn equation},
Math. Bohem.
\textbf{136} (2011), 429--437.

\bibitem{6}
K.S.~Chou and X.P.~Zhu,
\emph{The Curve Shortening Problem},
Chapman \& Hall/CRC,  2001.

\bibitem{7}
M.~E.~Gage and R.~S.~Hamilton,
\emph{The heat equation shrinking convex plane curves},
J. Differential Geom.
\textbf{23} (1986), 69--96.

\bibitem{8}
M.~E.~Gage,
\emph{On an area-preserving evolution equation for plane curves},
in \emph{Nonlinear Problems in Geometry}
(Mobile, Ala., 1985),
Contemp. Math., vol.~51,
Amer. Math. Soc., Providence, RI, 1986,
pp.~51--62.

\bibitem{9}
M.~E.~Gage,
\emph{Evolving plane curves by curvature in relative geometries},
Duke Math. J.
\textbf{72} (1993), 441--466.

\bibitem{10}
M.~E.~Gage and Y.~Li,
\emph{Evolving plane curves by curvature in relative geometries II},
Duke Math. J.
\textbf{75} (1994), 79--98.

\bibitem{11}
L.Y.~Gao and Y.T.~Zhang,
\emph{On Yau's problem of evolving one curve to another:
Convex case},
J. Differential Equations
\textbf{266} (2019), 179--201.

\bibitem{14}
Y.~Giga and N.~Po\v{z}\'ar,
\emph{Motion by crystalline-like mean curvature: a survey},
Bull. Math. Sci.
\textbf{12} (2022), Paper No.~2230004, 68~pp.

\bibitem{15}
M.~A.~Grayson,
\emph{The heat equation shrinks embedded plane curves to
round points},
J. Differential Geom.
\textbf{26} (1987), 285--314.

\bibitem{16}
M.~E.~Gurtin,
\emph{Thermomechanics of Evolving Phase Boundaries in the Plane},
Oxford Mathematical Monographs,
Clarendon Press, Oxford University Press, New York, 1993.

\bibitem{17}
M.~E.~Gurtin,
\emph{Multiphase thermomechanics with interfacial structure,
I: Heat conduction and the capillary balance law},
Arch. Ration. Mech. Anal.
\textbf{104} (1988), 195--221.

\bibitem{18}
M.~E.~Gurtin,
\emph{Toward a nonequilibrium thermomechanics of
two-phase materials},
Arch. Ration. Mech. Anal.
\textbf{100} (1988), 275--312.

\bibitem{19}
L.S.~Jiang and S.L.~Pan,
\emph{On a non-local curve evolution problem in the plane},
Comm. Anal. Geom.
\textbf{16} (2008), 1--26.

\bibitem{20}
L.~Liu, D.H.~Tsai, and X.L.~Wang,
\emph{On length-preserving and area-preserving anisotropic
curvature flow of convex closed plane curves},
J. Evol. Equ.
\textbf{25} (2025), Paper No.~2, 24~pp.

\bibitem{21}
L.~Ma and L.~Cheng,
\emph{A non-local area-preserving curve flow},
Geom. Dedicata
\textbf{171} (2014), 231--247.

\bibitem{22}
L.~Ma and A.-Q.~Zhu,
\emph{On a length-preserving curve flow},
Monatsh. Math.
\textbf{165} (2012), 57--78.

\bibitem{23}
W.~W.~Mullins,
\emph{Two-dimensional motion of idealized grain boundaries},
J. Appl. Phys.
\textbf{27} (1956), 900--904.

\bibitem{24}
S.L.~Pan and Y.L.~Yang,
\emph{An anisotropic area-preserving flow for convex plane curves},
J. Differential Equations
\textbf{266} (2019), 3764--3786.

\bibitem{25}
G.~Sapiro and A.~Tannenbaum,
\emph{Area and length preserving geometric invariant scale-spaces},
IEEE Trans. Pattern Anal. Mach. Intell.
\textbf{17} (1995), 67--72.

\bibitem{26}
Z.~Sun,
\emph{Deforming convex curves with constant anisotropic length},
J. Geom. Anal.
\textbf{35} (2025), Paper No.~296, 21~pp.


\bibitem{28}
D.~\v{S}ev\v{c}ovi\v{c} and S.~Yazaki,
\emph{On a gradient flow of plane curves minimizing the
anisoperimetric ratio},
IAENG Int. J. Appl. Math.
\textbf{43} (2013), 160--171.

\bibitem{29}
J.~E.~Taylor and J.~W.~Cahn,
\emph{Linking anisotropic sharp and diffuse surface motion laws
via gradient flows},
J. Statist. Phys.
\textbf{77} (1994), 183--197.

\bibitem{30}
D.H.~Tsai and X.L.~Wang,
\emph{On length-preserving and area-preserving nonlocal flow
of convex closed plane curves},
Calc. Var. Partial Differential Equations
\textbf{54} (2015), 3603--3622.

\bibitem{31}
K.S.~Tso,
\emph{Deforming a hypersurface by its Gauss--Kronecker curvature},
Comm. Pure Appl. Math.
\textbf{38} (1985), 867--882.

\bibitem{32}
X.L.~Wang,
\emph{The evolution of area-preserving and length-preserving
inverse curvature flows for immersed locally convex closed
plane curves},
J. Funct. Anal.
\textbf{284} (2023), Paper No.~109744, 25~pp.

\bibitem{33}
Y.~Wei and C.~Xiong,
\emph{A fully nonlinear locally constrained anisotropic
curvature flow},
Nonlinear Anal.
\textbf{217} (2022), Paper No.~112760, 29~pp.

\bibitem{34}
Y.~Wei and C.~Xiong,
\emph{A volume-preserving anisotropic mean curvature type flow},
Indiana Univ. Math. J.
\textbf{70} (2021), 881--905.

\end{thebibliography}
\end{document}